\documentclass[11pt,letterpaper]{amsart}
\bibliographystyle{plain}
\usepackage{graphics}
\usepackage{amscd}
\usepackage{color}
\usepackage{comment}
\usepackage{oldgerm}
\usepackage{amsmath}
\usepackage{amssymb}
\usepackage{amsthm}
\newtheorem{thm}{Theorem}[section]
\newtheorem{prop}[thm]{Proposition}
\newtheorem{lem}[thm]{Lemma}

\newtheorem{definition}[thm]{Definition}
\newtheorem{rem}[thm]{Remark}

\newcommand{\quater}{\mathbb{H}}
\newcommand{\Z}{\mathbb{Z}}

\newcommand{\R}{\mathbb{R}}
\newcommand{\C}{\mathbb{C}}
\newcommand{\T}{\mathbb{T}}

\newcommand{\del}{\partial}
\newcommand{\delb}{\overline{\partial}}

\newcommand{\E}{\mathcal{E}}
\newcommand{\vol}{{\rm vol}}
\newcommand{\PP}{\mathbb{P}}
\title[Smooth maps minimizing the energy]{Smooth maps minimizing the energy and the calibrated geometry}
\author[K. Hattori]{Kota Hattori}
\address{Keio University, 
3-14-1 Hiyoshi, Kohoku, Yokohama 223-8522, Japan}
\email{hattori@math.keio.ac.jp}

\begin{document}
\maketitle

\begin{abstract}
We generalize the notion of calibrated submanifolds to smooth maps and show that the several examples of smooth maps 
appearing in the differential geometry become the examples 
of our situation. 
Moreover, we apply these notion to 
give the lower bound to the energy of 
smooth maps in the given homotopy class 
between Riemannian manifolds, 
and consider the energy functional which is minimized 
by the identity maps on the Riemannian manifolds with 
special holonomy groups. 
\end{abstract}

\tableofcontents

\section{Introduction}
In this article, we introduce the notion of calibrated geometry for smooth maps between Riemannian manifolds 
and consider the lower bound or the minimizers of several energy of smooth maps. 
Let $(X,g)$ and $(Y,h)$ be a compact Riemannian manifolds 
and $f\colon X\to Y$ be a smooth map. 
Then the $p$-energy of $f$ is defined by 
\begin{align*}
\E_p(f):=\int_X |df|^pd\mu_g
\end{align*}
for $p\ge 1$, 
where $\mu_g$ is the volume measure of $g$. 
A harmonic map is defined to be a critical point of $\E_2$, 
and the study of it is 
one of the significant region in differential geometry.  
In 1964, Eells and Sampson \cite{ES1964} 
have shown that there is a harmonic map 
$f'$ homotopic to $f$ if the sectional curvature of $h$ is nonpositive. 
Moreover, Hartman \cite{Hartman1967} showed 
that such harmonic maps minimize $\E_2|_{[f]}$, 
where $[f]$ is the homotopy class represented by $f$. 

In general, harmonic maps need not minimize the energy. 
For example, although the identity maps on any Riemannian manifolds are always 
harmonic, it is known that there is a smooth map 
$f_\varepsilon$ homotopic to the identity map 
such that $\E_2(f_\varepsilon)=\varepsilon$ on 
the $n$-sphere $S^n$ with the standard metric and $n\ge 3$. 
By the result shown by White \cite{White1986}, 
if $\pi_l(X)$ is trivial 
for all $1\le l\le k$, then $\inf \E_k|_{[1_X]}=0$, 
where $1_X$ is the identity map of $X$. 

Now, we consider how to give the lower bound to the energy 
restricting to a given homotopy class $[f]$ 
and the minimizer of them. 
Such a lower bound was first obtained by 
Lichnerowicz \cite{Lichnerowicz1969} in the case of 
$(X,g)$ and $(Y,h)$ are K\"ahler manifolds, 
then it was shown that any holomorphic maps between 
K\"ahler manifolds minimize $\E_2$ in their homotopy classes. 
Moreover, Croke \cite{Croke1987} showed that the identity 
map on the real projective space with the standard metric 
minimize $\E_2$ in its homotopy class, 
then Croke and Fathi \cite{CrokeFathi1990} 
introduced the new homotopy invariant 
called the intersection, which gives the lower bound to 
$\E_2|_{[f]}$ for a given homotopy class $[f]$. 
Recently, Hoisington \cite{hoisington2021} give the lower bound 
to $\E_p$ for an appropriate $p$ in the case of 
$X$ is real, complex, or quaternionic projective spaces 
with the standard metrics. 

In this article, we generalize the notion of calibrated geometry 
to smooth maps between smooth manifolds and give the lower bound to the several energy. 
The origin of calibrated geometry is the 
Wirtinger's inequality for the 
even dimensional 
subspaces in hermitian inner product spaces 
\cite{Wirtinger1936}, 
then it refined or generalized 
by many researchers. 
In \cite{harvey1982calibrated}, 
Harvey and Lawson defined 
calibrated submanifolds in the 
Calabi-Yau, $G_2$ or $Spin(7)$ manifolds 
which minimize the volume in their homology classes. 
Similarly, we define the new class of smooth maps, 
called calibrated maps, and show that they minimize 
the appropriate energy 
for the given situation. 
Moreover, we obtain the next results as applications. 

The first application is to obtain the lower bound to $p$-energy 
restricting to the given homotopy class. 
We assume $X$ is oriented. 
The pullback of $f$ induces a linear map 
$[f^*]^k\colon H^k(Y,\R)\to H^k(X,\R)$. 
By fixing basis of $H^k(X,\R)$, $H^k(Y,\R)$, 
we obtain the matrix $P([f^*]^k)$ of $[f^*]^k$ 
and put $|P([f^*]^k)|:=\sqrt{{\rm tr}({}^tP([f^*]^k)\cdot P([f^*]^k))}$. 
\begin{thm}
Let $(X,g)$ and $(Y,h)$ be as above. 
For any $1\le k\le \dim X$, there is a positive constant $C$ 
depending only on $k$, 
$(X,g)$, $(Y,h)$ and the basis of 
$H^k(X,\R)$, $H^k(Y,\R)$ 
such that for any $f\in C^\infty(X,Y)$ we have 
\begin{align*}
\E_k(f)\ge C|P([f^*]^k)|.
\end{align*}
In particular, if $[f^*]^k$ is nonzero, 
then $\inf (\E_k|_{[f]})$ is positive. 
\label{thm main1}
\end{thm}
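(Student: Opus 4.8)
The plan is to reduce the theorem to an elementary pointwise bound for pulled‑back differential forms, combined with the finite‑dimensionality of de Rham cohomology and Poincar\'e duality on the compact oriented manifold $X$. Throughout, for an $n$-form $\theta=\phi\,d\mu_g$ on $X$ (with $n=\dim X$) I identify $\int_X\theta$ with $\int_X\phi\,d\mu_g$, so that $|\int_X\theta|\le\int_X|\theta|\,d\mu_g$, where $|\theta|$ denotes the pointwise norm of $\theta$.

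First I would establish the pointwise inequality
\begin{align*}
|(f^*\omega)_x|\le\binom{n}{k}^{1/2}\,|\omega_{f(x)}|\,|df_x|^k
\end{align*}
for every smooth $k$-form $\omega$ on $Y$ and every $x\in X$. Expanding $f^*\omega$ in a $g$-orthonormal coframe at $x$, each coefficient equals $\omega_{f(x)}(df_xe_{i_1},\dots,df_xe_{i_k})$, which is at most $|\omega_{f(x)}|\prod_j|df_xe_{i_j}|\le|\omega_{f(x)}|\,|df_x|^k$ since each $|df_xe_{i_j}|$ is bounded by the Hilbert--Schmidt norm $|df_x|$; summing the $\binom{n}{k}$ squared coefficients gives the bound. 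Integrating over $X$ yields $\int_X|f^*\omega|\,d\mu_g\le\binom{n}{k}^{1/2}\bigl(\max_Y|\omega|\bigr)\,\E_k(f)$.

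Next I would identify the entries of the matrix $P([f^*]^k)$ with cohomological pairings. Fix smooth closed forms $\omega_1,\dots,\omega_b$ representing the prescribed basis of $H^k(Y,\R)$; since $X$ is compact and oriented, the cup-product pairing $H^k(X,\R)\times H^{n-k}(X,\R)\to\R$ is perfect, so one can fix smooth closed $(n-k)$-forms $\eta_1,\dots,\eta_{b'}$ representing the basis of $H^{n-k}(X,\R)$ dual, under this pairing, to the prescribed basis of $H^k(X,\R)$. Then the $(a,j)$-entry of $P([f^*]^k)$ is exactly $\int_X f^*\omega_j\wedge\eta_a$: this integral depends only on the de Rham classes of the two factors (Stokes' theorem, and $f^*$ preserving closed and exact forms) and equals $\langle[f^*]^k[\omega_j]\cup[\eta_a],[X]\rangle$. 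Using the pointwise inequality $|\alpha\wedge\beta|\le|\alpha|\,|\beta|$ for a $k$-form $\alpha$ and an $(n-k)$-form $\beta$ (via the Hodge star) together with the integrated estimate above,
\begin{align*}
\bigl|\int_X f^*\omega_j\wedge\eta_a\bigr|\le\bigl(\max_X|\eta_a|\bigr)\int_X|f^*\omega_j|\,d\mu_g\le C_0\,\E_k(f),
\end{align*}
where $C_0$ depends only on $k$, $g$, $h$ and the finitely many fixed representatives $\omega_j,\eta_a$, hence only on the data allowed in the statement. Summing the squares over $a$ and $j$ gives $|P([f^*]^k)|^2\le bb'\,C_0^2\,\E_k(f)^2$, which is the asserted inequality with $C=(C_0\sqrt{bb'})^{-1}$. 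The last sentence of the theorem is then immediate, since $[f^*]^k$, and therefore $|P([f^*]^k)|$, depends only on the homotopy class $[f]$.

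Each individual step is routine once this formulation is adopted; the one point that really requires the right idea is matching the power of $|df|$. Estimating instead the $L^2$-norm of the harmonic projection of $f^*\omega_j$ would produce a term $\int_X|df|^{2k}$, which is \emph{not} controlled by $\E_k(f)=\int_X|df|^k$ without an a priori $C^0$ bound on $df$; pairing $f^*\omega_j$ against a fixed closed form through Poincar\'e duality and estimating in $L^1$ is exactly what turns the bound into one involving $|df|^k$. The remaining work is bookkeeping of constants: the combinatorial factors coming from $|f^*\omega|$ and from the wedge product, and the norm-equivalence constants comparing the Hodge/cup-product data on $X$ and $Y$ with the prescribed bases, all of which are absorbed into $C$.
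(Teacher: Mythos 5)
Your proof is correct and follows essentially the same strategy as the paper: both arguments bound each entry of $P([f^*]^k)$ by pairing $f^*\omega_j$ against a fixed closed $(\dim X-k)$-form on $X$ and then use the pointwise estimate $|f^*\omega|\le C\,|\omega|\,|df|^k$ (your direct computation is the content of the paper's Lemma \ref{lem cal k}, phrased there as a $\sigma_k$-calibration condition on $X\times Y$, which is the same thing since $(1_X,f)^*(\beta\wedge *_g\alpha)=f^*\beta\wedge *_g\alpha$). The one genuine, if minor, divergence is that you pair against the Poincar\'e-dual basis so the pairing matrix is the identity and the entries $P_{aj}=\int_X f^*\omega_j\wedge\eta_a$ are read off directly, whereas the paper pairs against $*_g\alpha_i$ for harmonic representatives $\alpha_i$ and must then pass through the minimum eigenvalue of the Gram matrix $G_{ij}=\int_X\alpha_i\wedge *_g\alpha_j$; your variant dispenses with Hodge theory at the cost of nothing.
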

In the above theorem, the compactness of $Y$ is not essential. 
See Theorem \ref{thm lower bdd of k-energy}. 

The second application is to show that 
the identity maps of some Riemannian manifolds with special holonomy groups minimize the appropriate energy. 
As we have already mentioned, the identity map 
on the real or complex projective space minimize $\E_2$ in its homotopy class 
by \cite{Croke1987} 
and \cite{Lichnerowicz1969}, respectively. 
It was shown by Wei \cite{Wei1998} that 
the identity map on the quaternionic projective space 
$\quater\PP^n$ with the standard metric is an unstable 
critical point of $\E_p$ 
for $1\le p< 2+4n/(n+1)$. 
Moreover, Hoisington gave the nontrivial lower bound of 
$\E_p|_{[1_{\quater\PP^n}]}$ for $p\ge 4$. 
Here, the quaternionic projective space 
is a typical example of quaternionic K\"ahler manifolds, 
which are Riemannian manifolds of dimension $4n$ 
whose holonomy group is contained in $Sp(n)\cdot Sp(1)$. 
Now, let $A$ be an $n\times m$ real-valued matrix 
and denote by $a_1,\ldots,a_m\in\R$ the 
nonnegative eigenvalues of ${}^tAA$, then put 
$|A|_p:=(\sum_{i=1}^m a_i^{p/2})^{1/p}$. 
Moreover, we define an energy $\E_{p,q}$ by 
\begin{align*}
\E_{p,q}(f):=\int_X|df|_p^qd\mu_g,
\end{align*}
then we have $\E_p=\E_{2,p}$. 
\begin{thm}\label{thm main2}
Let $(X,g)$ be a compact quaternionic K\"ahler manifold 
of dimension $4n\ge 8$. 
Then the identity map of $X$ minimizes $\E_{4,4}$ 
in its homotopy class. 
\end{thm}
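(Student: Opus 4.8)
The plan is to run a calibrated-maps argument based on the parallel fundamental $4$-form of $X$. Recall that a quaternionic K\"ahler manifold carries a globally defined $4$-form $\Omega$ which, in a local orthonormal frame $I,J,K$ of the quaternionic structure, equals (up to normalization) $\omega_I\wedge\omega_I+\omega_J\wedge\omega_J+\omega_K\wedge\omega_K$; since the holonomy lies in $Sp(n)\cdot Sp(1)$ one has $\nabla\Omega=0$, so $\Omega$ and each power $\Omega^j$ are closed, and, being parallel, $\Omega^n=c_n\,d\mu_g$ for a positive constant $c_n$ depending only on $n$. (It is here that $n\ge 2$ matters: for $n=1$ the form $\Omega$ is merely a multiple of $d\mu_g$ and the argument degenerates.) I would organize the proof in three steps, the third being the only one with real content.

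First I would note that, for $f\in C^\infty(X,X)$, the quantity
\begin{align*}
I(f):=\int_X f^*\Omega\wedge\Omega^{\,n-1}
\end{align*}
depends only on the homotopy class of $f$: both $f^*\Omega$ and $\Omega^{\,n-1}$ are closed, so by Stokes' theorem $I(f)=\langle f^*[\Omega]\smile[\Omega]^{\,n-1},[X]\rangle$, and $f^*[\Omega]$ is a homotopy invariant of $f$. In particular, for every $f$ homotopic to $1_X$ we get $I(f)=I(1_X)=\int_X\Omega^{\,n}=c_n\,\vol_g(X)$.

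The heart of the proof is the pointwise linear-algebra inequality: for every $x\in X$ and every linear endomorphism $L$ of $T_xX$,
\begin{align*}
(L^*\Omega)\wedge\Omega^{\,n-1}\ \le\ \frac{c_n}{4n}\,|L|_4^4\;d\mu_g\qquad\text{at }x,
\end{align*}
with equality at least when $L$ preserves the quaternionic structure up to a positive scalar --- in particular when $L=\mathrm{id}$. In the language of calibrated maps this says exactly that $1_X$ is an $\Omega$-calibrated map for the energy $\E_{4,4}$, and the general principle that calibrated maps minimize the relevant energy in their homotopy class then applies; but the argument can also be carried out by hand as in Steps~1 and~3 here. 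To prove the inequality I would expand $L^*\Omega=\sum_\alpha(L^*\omega_\alpha)\wedge(L^*\omega_\alpha)$, evaluate the $4n$-form $(L^*\Omega)\wedge\Omega^{\,n-1}$ as a quartic expression in the entries of $L$, and use $Sp(n)\cdot Sp(1)$-invariance to reduce it --- after passing to a basis adapted to the quaternionic structure and to the singular value decomposition of $L$ --- to a sum of monomials $\sigma_i\sigma_j\sigma_k\sigma_l$ in the singular values $\sigma_\bullet$ of $L$, indexed by "quaternionic quadruples"; then $\sigma_i\sigma_j\sigma_k\sigma_l\le\tfrac14(\sigma_i^4+\sigma_j^4+\sigma_k^4+\sigma_l^4)$ together with the fact that each index occurs in the sum with the same multiplicity should yield the inequality with coefficient exactly $c_n/(4n)$, while the equality case of AM--GM should pin down the extremal $L$. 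Note that the constant $c_n/(4n)$ is forced by the equality case (inserting $L=\mathrm{id}$ gives the identity $\Omega^n=\tfrac{c_n}{4n}\cdot(4n)\,d\mu_g$); the substance of the step is that this \emph{optimal} constant works for every $L$, not merely some constant.

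Finally, applying the pointwise inequality with $L=df_x$ and integrating over $X$ yields, for any $f$ homotopic to $1_X$,
\begin{align*}
\E_{4,4}(f)=\int_X|df|_4^4\,d\mu_g\ \ge\ \frac{4n}{c_n}\int_X f^*\Omega\wedge\Omega^{\,n-1}=\frac{4n}{c_n}\,I(f)=\frac{4n}{c_n}\,I(1_X)=4n\,\vol_g(X),
\end{align*}
and since $d(1_X)=\mathrm{id}$ has all singular values equal to $1$ we have $\E_{4,4}(1_X)=4n\,\vol_g(X)$; hence $1_X$ minimizes $\E_{4,4}$ in its homotopy class. I expect the main obstacle to be the pointwise inequality: unlike Lichnerowicz's K\"ahler computation, where a single complex structure permits simultaneous diagonalization of the relevant data, here $L$ is compared with the \emph{same} $4$-form on source and target and cannot in general be adapted to both the quaternionic structure and its own singular value decomposition, and the quaternionic structure is a $3$-plane of complex structures rather than one, so the bookkeeping of which $4$-index sets contribute to $\langle L^*\Omega,\Omega\rangle$, the verification that AM--GM returns precisely $\tfrac{c_n}{4n}\sum_i\sigma_i^4$, and the identification of the equality case all require care. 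The homotopy invariance, the constancy of $\Omega^n/d\mu_g$, and the final assembly are routine.
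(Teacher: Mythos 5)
Your overall architecture coincides with the paper's: the calibration is (a constant times) $\mathrm{pr}_2^*\Omega\wedge\mathrm{pr}_1^*(*_g\Omega)$ --- note your $\Omega^{n-1}$ is a positive multiple of $*_g\Omega$ because the $Sp(n)\cdot Sp(1)$-invariant part of $\Lambda^{4n-4}$ is one-dimensional --- the homotopy invariance and the final integration are routine as you say, and everything reduces to the pointwise inequality $(L^*\Omega)\wedge(*_g\Omega)\le\frac{|\Omega|^2}{4n}\,|L|_4^4\,\vol_g$ with equality at $L=\mathrm{id}$. But that inequality \emph{is} the theorem, and your sketch of it does not go through as written. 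You propose to pass ``to a basis adapted to the quaternionic structure and to the singular value decomposition of $L$'' and then count multiplicities of ``quaternionic quadruples''; as you yourself observe two sentences later, no such basis exists for a general $L$. In an SVD-adapted basis the coefficients of $\Omega$ are not $0,\pm1$ and the contributing index sets are not the quaternionic quadruples, so after AM--GM you are left with $\frac14\sum_j|a_j|^4\,w_j$ where the weights $w_j=\sum_{I\ni j}|F_IF_I'|$ involve the coefficients of $\Omega$ in \emph{two different} SVD bases, and ``each index occurs with the same multiplicity'' is exactly the unproved assertion that all $w_j$ are equal to the right constant.

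The paper closes this gap with two ingredients absent from your plan. First (Proposition~\ref{prop ptwise ineq}), the weight $w_j$ is bounded by Cauchy--Schwarz as $|\iota_{e_j}\Omega|\cdot|\iota_{Ue_j}\Omega|$, where $U$ is the orthogonal map relating the two SVD bases; no adaptation to the quaternionic structure is attempted at all. Second (Lemma~\ref{lem holonomy irred}), a Schur-type argument shows that for any closed subgroup $G\subset SO(m)$ acting irreducibly on $\R^m$ and stabilizing the model form, the function $u\mapsto|\iota_u\Omega|$ is constant on the unit sphere: the symmetric matrix $H_{ij}=g_0(\iota_{e_i}\Omega,\iota_{e_j}\Omega)$ is $G$-invariant, so its eigenspaces are $G$-invariant and hence all of $\R^m$. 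Applied to $G=Sp(n)\cdot Sp(1)$, which acts irreducibly on $\R^{4n}$ and fixes $\Omega$, this makes every $w_j$ equal to one constant $C^2$, pinned down by inserting $L=\mathrm{id}$. Without the Cauchy--Schwarz reduction of the weights to contraction norms and the representation-theoretic constancy of $|\iota_u\Omega|$, your ``equal multiplicity'' step is unsubstantiated, and it is precisely the point where the quaternionic case is genuinely harder than Lichnerowicz's K\"ahler computation.
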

We can also show the similar theorem in the case of 
other holonomy groups. 
If $(X,g)$ is a compact $G_2$ manifold, then $1_X$ 
minimizes $\E_{3,3}|_{[1_X]}$ and 
if $(X,g)$ is a compact $Spin(7)$ manifold, then $1_X$ 
minimizes $\E_{4,4}|_{[1_X]}$ 
(See Theorem \ref{thm id calib}). 
Moreover, it is easy to see that if the identity map minimizes 
$\E_{p,q}$, then it also minimizes $\E_{p',q'}$ 
for all $p'\ge p$ and $q'\ge q$ by the 
H\"older's inequality. 
Of course, we can also consider the case of 
K\"ahler, Calabi-Yau, hyper-K\"ahler manifolds respectively, however, the results in these cases also follow from \cite{Lichnerowicz1969}. 

This paper is organized as follows. 
In Section \ref{sec energy of map}, 
we define the notion of calibrated maps, which is the analogy 
of the calibrated submanifolds. 
In Section \ref{sec ex}, we explain some examples of 
calibrated maps. 
In particular, we show that holomorphic maps 
between K\"ahler manifolds and the inclusion maps of 
calibrated submanifolds can be regarded as calibrated maps. 
Moreover, we can also show that the fibration 
whose regular fibers are calibrated submanifolds are calibrated maps. 
We prove Theorem \ref{thm main1} in Section \ref{sec lower bdd}, 
and Theorem \ref{thm main2} in Section \ref{sec id map}. 
In Section \ref{sec intersection}, 
we compare the homotopy invariant introduced in 
\cite{CrokeFathi1990} with the invariants defined in this paper.

\vspace{0.2cm}
\paragraph{\bf Acknowledgment}
I would like to thank Professor Frank Morgan for his advice on this paper. 
This work was supported by JSPS KAKENHI Grant Numbers JP19K03474, JP20H01799.

\section{Calibrated maps}\label{sec energy of map}

Let $X,Y$ be smooth manifold 
of $\dim X=m$ and $\dim Y=n$. 
Throughout of this paper, 
we suppose $X$ is compact and oriented. 
We fix a volume form ${\rm vol}\in\Omega^m(X)$ on $X$, namely, 
a nowhere vanishing $m$-form 
which determines an orientation and a measure of $X$. 
For $m$-forms $v_1,v_2\in\Omega^m(X)$, there are 
$\varphi_i\subset C^\infty(X)$ with $v_i=\varphi_i\vol$. 
Then we write $v_1\le v_2$ if 
$\varphi_1(x)\le \varphi_2(x)$ for all $x\in X$. 

If a map 
$\sigma\colon C^\infty(X,Y)\to L^1(X)$ is given, 
then we can define an energy 
$\E\colon C^\infty(X,Y)\to \R$ by 
\begin{align*}
\E(f):=\int_X \sigma(f)\vol. 
\end{align*}

Now, $f_0,f_1\in C^\infty(X,Y)$ are said to be {\it homotopic} 
if there is a smooth map $F\colon [0,1]\times X\to Y$ 
such that $F(0,\cdot)=f_0$ and $F(1,\cdot)=f_1$. 
By Whitney approximation theorem, it is equivalent to 
the existence of the continuous homotopy 
joining $f_0$ and $f_1$. 
For $f\in C^\infty(X,Y)$, denote by $[f]\subset C^\infty(X,Y)$ 
the homotopy equivalent class represented by $f$. 
In this paper we consider the lower bound to $\E|_{[f]}$ or the minimum 
of $\E|_{[f]}$. 

Denote by $1_X\colon X\to X$ the identity map on $X$. 
We define a smooth map $(1_X,f)\colon X\to X\times Y$ 
by 
\begin{align*}
(1_X,f)(x):=(x,f(x)).
\end{align*}

The next definition is the analogy of 
\cite{harvey1982calibrated}. 
\begin{definition}
\normalfont
$\Phi
\in\Omega^m(X\times Y)$ is a {\it $\sigma$-calibration} 
if $d\Phi=0$ and 
\begin{align*}
(1_X,f)^*\Phi\le \sigma(f)\vol
\end{align*}
for any smooth map $f\colon X\to Y$. 
Moreover, $f$ is a {\it $(\sigma,\Phi)$-calibrated map} 
if 
\begin{align*}
(1_X,f)^*\Phi = \sigma(f)\vol.
\end{align*}
\end{definition}

\begin{thm}
Let $\sigma$ be an energy density and 
$\Phi$ be a $\sigma$-calibration. 
\begin{itemize}
\setlength{\parskip}{0cm}
\setlength{\itemsep}{0cm}
 \item[$({\rm i})$] The constant 
$\int_X(1_X,f)^*\Phi$ is determined by the homotopy class $[f]$. 
In other words, 
$\int_X(1_X,f_0)^*\Phi=\int_X(1_X,f_1)^*\Phi$ if $[f_0]=[f_1]$. 
 \item[$({\rm ii})$] We have $\inf\E|_{[f]} \ge \int_X(1_X,f)^*\Phi$ for any 
$f\in C^\infty(X,Y)$. 
 \item[$({\rm iii})$] We have $\E(f)=\int_X(1_X,f)^*\Phi$ iff $f$ is $(\sigma,\Phi)$-calibrated map. In particular, any 
$(\sigma,\Phi)$-calibrated map minimizes $\E$ in its homotopy class. 
\end{itemize}
\end{thm}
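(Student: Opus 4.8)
The plan is to prove the three items in order, each reducing to a short computation once the right observation is made.

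For item (i), the key point is that $(1_X, f_0)$ and $(1_X, f_1)$ are homotopic maps $X \to X \times Y$ whenever $f_0$ and $f_1$ are homotopic: if $F\colon [0,1]\times X \to Y$ is a homotopy from $f_0$ to $f_1$, then $(\pi_X, F)\colon [0,1]\times X \to X\times Y$, $(t,x)\mapsto (x, F(t,x))$, is a homotopy from $(1_X,f_0)$ to $(1_X,f_1)$. Since $\Phi$ is closed, the pullback of its de Rham cohomology class along a continuous map depends only on the homotopy class of that map, and integration over the closed oriented manifold $X$ factors through cohomology; hence $\int_X (1_X,f_0)^*\Phi = \int_X (1_X,f_1)^*\Phi$. (One may invoke Stokes directly: $(1_X,f_1)^*\Phi - (1_X,f_0)^*\Phi = d\big(\int_0^1 \iota_{\partial_t}(\pi_X,F)^*\Phi\, dt\big)$ is exact, so its integral over $X$ vanishes.)

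For item (ii), fix $f$ and take any $f' \in [f]$. Using the defining inequality of a $\sigma$-calibration applied to $f'$, and then item (i),
\begin{align*}
\E(f') = \int_X \sigma(f')\,\vol \;\ge\; \int_X (1_X,f')^*\Phi \;=\; \int_X (1_X,f)^*\Phi.
\end{align*}
Taking the infimum over $f' \in [f]$ gives $\inf \E|_{[f]} \ge \int_X (1_X,f)^*\Phi$. For item (iii), observe that $\E(f) - \int_X (1_X,f)^*\Phi = \int_X \big(\sigma(f)\,\vol - (1_X,f)^*\Phi\big)$, and the integrand is a nonnegative multiple of $\vol$ by the calibration inequality (here using the pointwise meaning of $\le$ on $\Omega^m(X)$). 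An integral of a nonnegative $m$-form against the orientation vanishes iff the form vanishes identically, i.e. iff $(1_X,f)^*\Phi = \sigma(f)\,\vol$ pointwise, which is exactly the condition that $f$ be a $(\sigma,\Phi)$-calibrated map. Combined with items (i) and (ii), such an $f$ satisfies $\E(f) = \int_X(1_X,f)^*\Phi = \inf \E|_{[f]}$, so it minimizes $\E$ in $[f]$.

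I do not expect a serious obstacle here; the argument is the standard calibration/homotopy-invariance package transported to the graph setting. The only point requiring a little care is the passage "$\int_X v = 0$ and $v \ge 0$ imply $v = 0$" for $v \in \Omega^m(X)$: this uses that $\vol$ is nowhere vanishing and determines the measure, so $v = \varphi\,\vol$ with $\varphi \ge 0$ continuous and $\int_X \varphi\, d\mu_{\vol} = 0$ forces $\varphi \equiv 0$ by continuity. A secondary subtlety is smooth-versus-continuous homotopy in item (i), but this is precisely addressed by the Whitney approximation remark already in the text, so I would simply cite that.
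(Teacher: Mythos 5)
Your proof is correct and follows essentially the same route as the paper: homotopy invariance of $\int_X(1_X,f)^*\Phi$ via the cohomology class of the pullback of the closed form $\Phi$, the calibration inequality integrated over $X$ for (ii), and the pointwise nonnegativity of $\sigma(f)\vol-(1_X,f)^*\Phi$ for the equality case in (iii). The extra details you supply (the explicit homotopy $(t,x)\mapsto(x,F(t,x))$, the Stokes formula, and the ``nonnegative form with zero integral vanishes'' step) are all consistent with what the paper leaves implicit.
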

\begin{proof}
$({\rm i})$ If $f_0,f_1$ are homotopic, then $(1_X,f_0)$ and $(1_X,f_1)$ are homotopic, 
accordingly $(1_X,f_0)^*\Phi $ and $(1_X,f_1)^*\Phi$ represent 
the same cohomology class by \cite[Corollary 4.1.2]{BT1982}. 

$({\rm ii})$ is shown by the definition of 
$\sigma$-calibration. 

$({\rm iii})$ 
By the point-wise inequality 
$(1_X,f)^*\Phi\le \sigma(f)\vol$, 
we have $\E(f)=\int_X(1_X,f)^*\Phi$ iff $(1_X,f)^*\Phi= \sigma(f)\vol$. 
\end{proof}

\section{Examples}\label{sec ex}

One of the typical example of the energy of maps is 
$p$-energy defined for the smooth maps between 
Riemannian manifolds. 
Let $(X,g)$ and $(Y,h)$ be Riemannian manifolds 
and $f\colon X\to Y$ be a smooth map. 
Then the pullback $f^*h$ is a section of 
$T^*X\otimes T^*X$, we can take the trace ${\rm tr}_g(f^*h)$. 
For $p\ge 1$, put $\sigma_p(f):=\{ {\rm tr}_g(f^*h)\}^{p/2}$. 
We assume that $X$ is oriented and denote by $\vol_g$ 
the volume form of $g$. 
The $p$-energy $\E_p(f)$ is defined by 
\begin{align*}
\E_p(f):=\int_X\sigma_p(f)\vol_g.
\end{align*}

Now, the differential $df_x$ is an element of $T^*_xX\otimes T_{f(x)}Y$ for every $x\in X$. 
Since $g_x$ and $h_{f(x)}$ 
induces the natural inner product and the norm on 
$T^*_xX\otimes T_{f(x)}Y$, 
Then we may also write 
$\sigma_p(f)(x)=|df_x|^p$.

By the H\"older's inequality, we have 
\begin{align*}
\E_p(f)&\le \vol_g(X)^{1-p/q}\E_q(f)^{p/q}
\end{align*}
for $1\le p\le q$. Thus we have the following proposition. 
\begin{prop}
Let $\Phi\in\Omega^m(X\times Y)$ be 
a $\sigma_p$-calibration. 
Then 
\begin{align*}
\vol_g(X)^{-1+p/q}\int_X(1_X,f)^*\Phi\le \E_q(f)^{p/q}
\end{align*}
for any $q\ge p$ and $f\in C^\infty(X,Y)$. 
\label{prop holder}
\end{prop}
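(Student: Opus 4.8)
The plan is to chain together the defining inequality of a $\sigma_p$-calibration with the H\"older inequality already recorded just before the statement. First, since $\Phi$ is a $\sigma_p$-calibration, the pointwise bound $(1_X,f)^*\Phi\le\sigma_p(f)\vol_g$ holds on all of $X$; integrating over the compact oriented manifold $X$ gives
\[
\int_X(1_X,f)^*\Phi\le\int_X\sigma_p(f)\vol_g=\E_p(f).
\]
This is exactly part $({\rm ii})$ of the preceding theorem applied to the energy $\E_p$, so nothing new is required at this step.

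Next I would invoke the H\"older inequality $\E_p(f)\le\vol_g(X)^{1-p/q}\E_q(f)^{p/q}$ displayed above. For completeness one obtains it by writing $\E_p(f)=\int_X|df|^p\cdot 1\,\vol_g$ and applying H\"older with the conjugate exponents $q/p$ and $q/(q-p)$; the hypothesis $q\ge p$ is precisely what guarantees that these exponents are $\ge 1$, and that the exponent $1-p/q$ appearing in the bound is nonnegative. When $q=p$ the inequality degenerates to a trivial identity and the proposition reduces to part $({\rm ii})$ of the theorem.

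Chaining the two estimates yields $\int_X(1_X,f)^*\Phi\le\vol_g(X)^{1-p/q}\E_q(f)^{p/q}$, and dividing through by $\vol_g(X)^{1-p/q}$, which is a positive constant because $X$ carries a nowhere vanishing volume form, gives the asserted inequality. There is essentially no obstacle here: the only things to keep track of are the role of the assumption $q\ge p$ in the H\"older step and the positivity of $\vol_g(X)$. One may also remark that, by part $({\rm i})$ of the theorem, $\int_X(1_X,f)^*\Phi$ depends only on $[f]$, so the resulting bound is a genuine homotopy-class lower bound for $\E_q^{p/q}$, in the spirit of Theorem \ref{thm main1}.
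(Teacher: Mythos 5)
Your proposal is correct and follows exactly the route the paper intends: part $({\rm ii})$ of the calibration theorem gives $\int_X(1_X,f)^*\Phi\le\E_p(f)$, and the H\"older inequality $\E_p(f)\le\vol_g(X)^{1-p/q}\E_q(f)^{p/q}$ displayed immediately before the proposition finishes the argument. Nothing is missing.
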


\subsection{holomorphic maps}
Here, assume that $X,Y$ are complex manifolds 
and $g,h$ are K\"ahler metrics. 
Let $m=\dim_\C X$ and $n=\dim_\C Y$. 
Then we have the decomposition 
\begin{align*}
T^*X\otimes \C&=\Lambda^{1,0}T^*X\oplus\Lambda^{0,1}T^*X,\\
TY\otimes \C&=T^{1,0}Y\oplus T^{0,1}Y,
\end{align*}
accordingly the derivative $df\in \Gamma(T^*X\otimes f^*TY)$ is decomposed into 
\begin{align*}
df&=(\del f)^{1,0}+(\del f)^{0,1}+(\delb f)^{1,0}+(\delb f)^{0,1}\\
&\in (\Lambda^{1,0}T^*X\otimes T^{1,0}Y)
\oplus (\Lambda^{1,0}T^*X\otimes T^{0,1}Y)\\
&\quad\quad\oplus (\Lambda^{0,1}T^*X\otimes T^{1,0}Y)
\oplus (\Lambda^{0,1}T^*X\otimes T^{0,1}Y).
\end{align*}
Since $df$ is real, we have 
\begin{align*}
\overline{(\del f)^{1,0}}=(\delb f)^{0,1},
\quad\overline{(\del f)^{0,1}}=(\delb f)^{1,0}.
\end{align*}
Denote by $\omega_g,\omega_h$ the 
K\"ahler form of $g,h$, respectively, 
then the volume form is given by 
$\vol_g=\frac{1}{m!}\omega_g^m$. 
The following observation was given by Lichnerowicz.
\begin{thm}[{\cite{Lichnerowicz1969}}]
For any smooth map $f\colon X\to Y$, we have 
\begin{align*}
\omega_g^{m-1}\wedge f^*\omega_h
&=(m-1)!(|(\del f)^{1,0}|^2-|(\delb f)^{1,0}|^2)\vol_g,\\
|df|^2&=2|(\del f)^{1,0}|^2+2|(\delb f)^{1,0}|^2.
\end{align*}
In particular, we have 
\begin{align*}
\E_2(f)\ge \frac{2}{(m-1)!}\int_X\omega_g^{m-1}\wedge f^*\omega_h
\end{align*}
and the equality holds 
iff $f$ is holomorphic. 
\label{thm Lich}
\end{thm}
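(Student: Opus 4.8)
The plan is to reduce both identities to a pointwise computation in unitary coframes and then obtain the inequality by integrating their difference. Fix $x\in X$ and choose a coframe $e^1,\dots,e^m$ of $\Lambda^{1,0}T^*_xX$, unitary for the Hermitian metric induced by $g$, so that $\omega_g=i\sum_{\alpha}e^\alpha\wedge\bar e^\alpha$ at $x$; likewise choose a unitary coframe $\epsilon^1,\dots,\epsilon^n$ of $\Lambda^{1,0}T^*_{f(x)}Y$ with $\omega_h=i\sum_j\epsilon^j\wedge\bar\epsilon^j$. Decompose the pull-back of each $\epsilon^j$ as $f^*\epsilon^j=\phi^j+\psi^j$ with $\phi^j=\sum_\alpha a^j_\alpha e^\alpha$ its $(1,0)$-part and $\psi^j=\sum_\alpha b^j_\alpha\bar e^\alpha$ its $(0,1)$-part. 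Since the coframes are unitary, the arrays $(a^j_\alpha)$ and $(b^j_\alpha)$ are precisely the components of $(\del f)^{1,0}$ and $(\delb f)^{1,0}$ in these coframes, so that $|(\del f)^{1,0}|^2=\sum_{j,\alpha}|a^j_\alpha|^2$ and $|(\delb f)^{1,0}|^2=\sum_{j,\alpha}|b^j_\alpha|^2$ at $x$.

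For the norm identity I would use that the four pieces of $df$ lie in the four mutually orthogonal summands $(\Lambda^{1,0}T^*X\otimes T^{1,0}Y)\oplus\cdots\oplus(\Lambda^{0,1}T^*X\otimes T^{0,1}Y)$, so $|df|^2$ is the sum of the four squared component norms; the conjugation relations $\overline{(\del f)^{1,0}}=(\delb f)^{0,1}$ and $\overline{(\del f)^{0,1}}=(\delb f)^{1,0}$ recorded above pair these into two equal pairs, giving $|df|^2=2|(\del f)^{1,0}|^2+2|(\delb f)^{1,0}|^2$. For the first identity I would note that $\omega_g^{m-1}$ has type $(m-1,m-1)$, so after wedging only the $(1,1)$-part of $f^*\omega_h=i\sum_j f^*\epsilon^j\wedge\overline{f^*\epsilon^j}$ survives; expanding $(\phi^j+\psi^j)\wedge(\bar\phi^j+\bar\psi^j)$ and dropping the $(2,0)$ and $(0,2)$ terms leaves $i\sum_j(\phi^j\wedge\bar\phi^j+\psi^j\wedge\bar\psi^j)$. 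The linear-algebra fact $\omega_g^{m-1}\wedge(i\,e^\alpha\wedge\bar e^\beta)=\delta_{\alpha\beta}(m-1)!\,\vol_g$ then kills all off-diagonal terms; substituting $\phi^j=\sum_\alpha a^j_\alpha e^\alpha$, $\psi^j=\sum_\alpha b^j_\alpha\bar e^\alpha$ and using the sign change $\bar e^\alpha\wedge e^\alpha=-e^\alpha\wedge\bar e^\alpha$ on the $\psi$-terms yields $\omega_g^{m-1}\wedge f^*\omega_h=(m-1)!\big(\sum_{j,\alpha}|a^j_\alpha|^2-\sum_{j,\alpha}|b^j_\alpha|^2\big)\vol_g=(m-1)!\big(|(\del f)^{1,0}|^2-|(\delb f)^{1,0}|^2\big)\vol_g$.

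Finally, subtracting $\frac{2}{(m-1)!}$ times the first identity from $|df|^2\vol_g$ and using the second gives the pointwise identity $|df|^2\vol_g-\frac{2}{(m-1)!}\,\omega_g^{m-1}\wedge f^*\omega_h=4|(\delb f)^{1,0}|^2\vol_g\ge 0$; integrating over $X$ yields $\E_2(f)\ge\frac{2}{(m-1)!}\int_X\omega_g^{m-1}\wedge f^*\omega_h$, with equality iff $(\delb f)^{1,0}\equiv 0$. Since $\overline{(\del f)^{0,1}}=(\delb f)^{1,0}$, the vanishing of $(\delb f)^{1,0}$ is equivalent to $(\del f)^{0,1}=(\delb f)^{1,0}=0$, i.e. to $df$ being complex linear, i.e. to $f$ being holomorphic. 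The only real source of difficulty here is clerical: one must pin down the normalization conventions (the relation between $\vol_g$ and $\omega_g^m$, the factor relating the Riemannian norm $|df|^2$ to the Hermitian norms of the components of $df$, and the signs in the wedge products) and keep them consistent so that the constants $2$ and $4$ come out exactly as stated; there is no conceptual obstacle.
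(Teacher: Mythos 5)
Your proof is correct. Note that the paper itself gives no proof of this theorem --- it is stated as a cited result of Lichnerowicz --- so there is nothing to compare against; your argument is the standard one (expand $f^*\omega_h$ in unitary coframes, kill the $(2,0)$ and $(0,2)$ parts against $\omega_g^{m-1}$, use $\omega_g^{m-1}\wedge(i\,e^\alpha\wedge\bar e^\beta)=\delta_{\alpha\beta}(m-1)!\,\vol_g$, and subtract). The one delicate point, which you correctly flag, is the normalization tying $\sum_{j,\alpha}|a^j_\alpha|^2$ and $\sum_{j,\alpha}|b^j_\alpha|^2$ to the norms $|(\del f)^{1,0}|^2$ and $|(\delb f)^{1,0}|^2$ appearing in the second identity; with the coframe convention $\omega_g=i\sum_\alpha e^\alpha\wedge\bar e^\alpha$ this is consistent (a quick sanity check with $f=1_{\C}$ gives $|df|^2=2=2|a|^2$), so the pointwise identity $|df|^2\vol_g-\tfrac{2}{(m-1)!}\omega_g^{m-1}\wedge f^*\omega_h=4|(\delb f)^{1,0}|^2\vol_g$ and the equality case ($f$ holomorphic) follow as you state.
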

Now, we consider $\omega_g^{m-1}\wedge \omega_h\in \Omega^m(X\times Y)$. 
The first two equalities in Theorem \ref{thm Lich} 
implies that $\frac{2}{(m-1)!}\omega_g^{m-1}\wedge \omega_h$ 
is a $\sigma_2$-calibration. 
Moreover, the second statement implies that 
$f$ is a $(\sigma_2,\frac{2}{(m-1)!}\omega_g^{m-1}\wedge \omega_h)$-calibrated map iff $f$ is holomorphic. 
One can also see that 
$f$ is $(\sigma_2,-\frac{2}{(m-1)!}\omega_g^{m-1}\wedge \omega_h)$-calibrated map iff $f$ is anti-holomorphic. 

\subsection{Calibrated submanifolds}
In this subsection, we see the relation between 
the calibrated submanifolds in the sense of  \cite{harvey1982calibrated} and 
the calibrated maps. 
We assume $(Y^n,h)$ is a Riemannian manifold. 
\begin{definition}[\cite{harvey1982calibrated}]
\normalfont
For an integer $0<m<n$, 
$\psi
\in\Omega^m(Y)$ is a {\it calibration} 
if $d\psi=0$ and 
\begin{align*}
\psi|_V\le \vol_{h|_V}
\end{align*}
for any $y\in Y$ and 
$m$-dimensional oriented 
subspace $V\subset T_yY$. 
Here, $h|_V$ is the induced metric on $V$ and 
$\vol_{h|_V}$ is its volume form whose orientation 
is compatible with the one equipped with $V$. 
Moreover, an oriented submanifold 
$X\subset Y$ is a {\it calibrated submanifold} 
if 
\begin{align*}
\psi|_{T_xX}=\vol|_{h|_{T_xX}}
\end{align*}
for any $x\in X$. 
\label{def HL cal}
\end{definition}

Now, if $X$ is an oriented manifold 
with a volume form $\vol\in\Omega^m(X)$, 
then for every linear map 
$A\colon T_xX\to T_yY$ 
can be regarded as an $n\times m$-matrix 
by taking a 
basis $e_1,\ldots,e_m$ of $T_xX$ and an orthonormal basis of $T_yY$ with $\vol_x(e_1,\ldots,e_m)=1$. 
Then $\sqrt{\det( {}^tA\cdot A)}$ does not 
depend on the choice of these basis. 
Therefore, for $f\in C^\infty(X,Y)$, we can define 
the energy density 
$\tau_m(f)(x):=\sqrt{\det({}^tdf_x\cdot df_x)}$ and the energy 
$\E_{\tau_m}(f):=\int_X\tau_m(f)\vol$.

\begin{prop}
Let $(X,\vol)$ be an oriented manifold equipped with a volume form and $\psi\in\Omega^m(Y)$ be closed. 
Assume that $\dim_\R X=m<n$ and 
denote by $\pi_Y\colon X\times Y\to Y$ the 
natural projection. 
Then $\psi$ is a calibration iff 
$\pi_Y^*\psi\in\Omega^m(X\times Y)$ 
is a $\tau_m$-calibration. 
Moreover, for any embedding $f\colon X\to Y$, 
the following conditions are equivalent. 
\begin{itemize}
\setlength{\parskip}{0cm}
\setlength{\itemsep}{0cm}
 \item[$({\rm i})$] 
 $f(X)$ is a calibrated submanifold, where the orientation of $f(X)$ is determined such that $f$ preserves the orientation. 
 \item[$({\rm ii})$] $f$ is 
a $(\tau_m,\pi_Y^*\psi)$-calibrated map. 
\end{itemize}
\end{prop}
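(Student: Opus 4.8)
The plan is to reduce everything to a pointwise, linear-algebra statement and then integrate. The key observation is that for an embedding $f\colon X\to Y$, the graph embedding $(1_X,f)\colon X\to X\times Y$ has image the graph $\Gamma_f$, and the composite $\pi_Y\circ(1_X,f)=f$. Hence $(1_X,f)^*\pi_Y^*\psi=f^*\psi$. So the defining inequality for a $\tau_m$-calibration, namely $(1_X,f)^*\pi_Y^*\psi\le\tau_m(f)\vol$ for all smooth $f$, becomes $f^*\psi\le\tau_m(f)\vol$ for all smooth $f$. The whole proposition is then a matter of comparing this with the Harvey--Lawson condition $\psi|_V\le\vol_{h|_V}$ for all oriented $m$-planes $V\subset T_yY$, and, in the equality case, the condition $\psi|_{T_xX}=\vol|_{h|_{T_xX}}$.

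First I would establish the pointwise dictionary. Fix $x\in X$, $y=f(x)$, and an oriented basis $e_1,\dots,e_m$ of $T_xX$ with $\vol_x(e_1,\dots,e_m)=1$. Write $A=df_x\colon T_xX\to T_yY$. If $A$ is injective (which it is when $f$ is an embedding, but I want the general inequality too), then $V:=A(T_xX)$ is an oriented $m$-plane, and a direct computation gives $(f^*\psi)_x(e_1,\dots,e_m)=\psi_y(Ae_1,\dots,Ae_m)=(\psi|_V)(Ae_1,\dots,Ae_m)$, while $\sqrt{\det({}^tA\,A)}$ is exactly the volume, measured by $\vol_{h|_V}$, of the parallelepiped spanned by $Ae_1,\dots,Ae_m$. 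Thus $(f^*\psi)_x\le\tau_m(f)(x)\,\vol_x$ is equivalent to $(\psi|_V)(Ae_1,\dots,Ae_m)\le\vol_{h|_V}(Ae_1,\dots,Ae_m)$, i.e. (since both sides scale linearly under replacing the basis by another positively oriented basis, and $\psi|_V$, $\vol_{h|_V}$ are top forms on $V$) to $\psi|_V\le\vol_{h|_V}$ as oriented $m$-forms on $V$. When $A$ is not injective, $\det({}^tA\,A)=0$ and $f^*\psi$ vanishes at $x$, so the inequality holds trivially. This gives: $\pi_Y^*\psi$ is a $\tau_m$-calibration $\iff$ for every $x$, every injective linear $A\colon T_xX\to T_yY$ (any $y$), the above holds $\iff$ $\psi|_V\le\vol_{h|_V}$ for every oriented $m$-plane $V\subset T_yY$ at every $y\in Y$, because any such $(V,y)$ arises as $(A(T_xX),y)$ for a suitable $A$ (pick any orientation-preserving linear iso onto $V$). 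That is precisely the calibration condition in Definition \ref{def HL cal}, proving the first assertion.

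Next, for the equivalence $({\rm i})\iff({\rm ii})$ with $f$ an embedding: by the same pointwise dictionary applied with $A=df_x$ (now automatically injective), $(1_X,f)^*\pi_Y^*\psi=f^*\psi=\tau_m(f)\vol$ at $x$ $\iff$ $\psi|_{V_x}=\vol_{h|_{V_x}}$ on $V_x:=df_x(T_xX)=T_{f(x)}f(X)$. Requiring this for all $x\in X$ is, by the definition, exactly the statement that $f(X)$ is a calibrated submanifold, with the orientation on $f(X)$ transported from $X$ via $f$ (which is what makes the oriented $m$-form comparison, rather than just the absolute-value comparison, the right one). Since $f$ is a homeomorphism onto $f(X)$, "for all $x\in X$" and "for all points of $f(X)$" coincide. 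Hence $({\rm i})\iff({\rm ii})$.

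I expect the main obstacle to be purely bookkeeping rather than conceptual: being careful that the comparison "$\le$" in the definition of $\tau_m$-calibration is the comparison of $m$-forms on $X$ relative to the fixed $\vol$, whereas the Harvey--Lawson "$\le$" compares $m$-forms on the subspace $V$ relative to its intrinsic volume form $\vol_{h|_V}$, and checking that the identity $\sqrt{\det({}^tA\,A)}\,\vol_x = (A^{-1})^*(\vol_{h|_V})$ (for $A$ injective onto $V$) makes these two notions match up with the correct orientation. One should also note explicitly that the constant $\sqrt{\det({}^tA\,A)}$ is independent of the choices (stated already in the excerpt) and that, for non-embeddings, the inequality is vacuous on the degeneracy locus, so the "for all smooth $f$" in the $\tau_m$-calibration condition really does test exactly "all oriented $m$-planes." No hard analysis or curvature input is needed; it is the linear-algebra translation that must be done cleanly.
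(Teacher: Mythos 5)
Your proof is correct and follows essentially the same route as the paper's (very terse) argument: both rest on the identities $(1_X,f)^*\pi_Y^*\psi=f^*\psi$ and $\tau_m(f)\,\vol=\vol_{f^*h}$, reducing everything to the pointwise comparison of $\psi$ with the induced volume form on $df_x(T_xX)$. You merely spell out the linear-algebra dictionary and the realizability of an arbitrary oriented $m$-plane as an image of $df_x$, which the paper leaves implicit.
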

\begin{proof}
Note that $(1_X,f)^*(\pi_Y^*\psi)=f^*\psi$ 
and 
$\tau_m(f)\vol_g=\vol_{f^*h}$. 
Hence $\psi$ is a calibration iff 
$\pi_Y^*\psi\in\Omega^m(X\times Y)$ 
is a $\tau_m$-calibration. 
Moreover, 
suppose that $f$ is an embedding. 
Then $f$ is a $(\tau_m,\pi_Y^*\psi)$-calibrated map iff $f(X)$ is a calibrated submanifold. 
\end{proof}

\subsection{Fibrations}
Let $(X^m,g)$ be an oriented Riemannian manifold and 
$Y^n$ be a smooth manifold equipped with a volume form 
$\vol_Y\in\Omega^n(Y)$. 
Here, we suppose $n< m$ and 
let 
$\varphi\in\Omega^{m-n}$ be a calibration 
in the sense of Definition \ref{def HL cal}. 
Fix an orthonormal basis of $T_xX$ 
and a basis $e_1',\ldots,e_n'\in T_yY$ 
with $\vol_Y(e_1',\ldots,e_n')=1$, we can regard 
a linear map $A\colon T_xX\to T_yY$ as 
an $m\times n$-matrix. 
Then the value of $\sqrt{\det(A\cdot{}^tA)}$ 
does not depend on the choice of above basis. 
For a smooth map $f\colon X\to Y$, put 
$\tilde{\tau}_{m,n}(f)|_x:=\sqrt{\det(df_x\cdot{}^tdf_x)}$ 
and $\Phi:=\vol_Y\wedge \varphi$. 

Put
\begin{align*}
X_{\rm reg}:=\{ x\in X|\, x\mbox{ is a regular point of }f\}.
\end{align*}
Note that $X_{\rm reg}$ is open in $X$. 
If $x\in X_{\rm reg}$, 
we have the orthogonal decomposition $T_xX={\rm Ker}(df_x)\oplus H$ and $df_x|_H\colon H\to T_{f(x)}Y$ is a linear 
isomorphism. 
Put $y=f(x)$ and suppose that $f^{-1}(y)$ is a calibrated submanifold 
with respect to the suitable orientation. 
We say that $df_x$ is {\it orientation preserving} 
if there is a basis 
$v_1,\ldots,v_m$ of $T_xX$ such that 
\begin{align*}
v_1,\ldots,v_n&\in H,
\quad \vol_Y(df_x(v_1),\ldots,df_x(v_n))>0,\\
v_{n+1},\ldots,v_m&\in {\rm Ker}(df_x),\quad
\varphi_x(v_{n+1},\ldots,v_m)>0,\\
\vol_g(v_1,\ldots,v_m)&>0.
\end{align*}
\begin{prop}
$\Phi$ is a $\tilde{\tau}_{m,n}$-calibration. 
Moreover, a smooth map $f\colon X\to Y$ 
is a $(\tilde{\tau}_{m,n},\Phi)$-calibrated map 
iff 
\begin{itemize}
\setlength{\parskip}{0cm}
\setlength{\itemsep}{0cm}
 \item[$({\rm i})$] 
$f^{-1}(y)\cap X_{\rm reg}$ is a calibrated submanifold with respect to $\varphi$ and the suitable orientation for any $y\in Y$, 
 \item[$({\rm ii})$] 
$df_x$ is orientation preserving for any $x\in X_{\rm reg}$.
\end{itemize}
\end{prop}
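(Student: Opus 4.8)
## Proof proposal

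\begin{proof}[Proof sketch]
The plan is to establish the calibration inequality pointwise and then identify the equality case. First I would fix $x\in X$, put $y=f(x)$, choose an orthonormal basis $e_1,\dots,e_m$ of $T_xX$ compatible with $\vol_g$, and a basis $e_1',\dots,e_n'$ of $T_yY$ with $\vol_Y(e_1',\dots,e_n')=1$, so that $df_x$ is represented by an $m\times n$ matrix $A$. The key computation is to evaluate $(1_X,f)^*\Phi = (1_X,f)^*(\vol_Y\wedge\varphi)$ at $x$ against $e_1\wedge\cdots\wedge e_m$. Expanding the wedge product and using that $(1_X,f)^*\vol_Y = f^*\vol_Y$ pairs the $Y$-directions of $df$ while $(1_X,f)^*\varphi = \varphi$ keeps the $X$-directions, one gets a sum over splittings $\{1,\dots,m\} = I\sqcup J$ with $|I|=n$, $|J|=m-n$, of terms $\pm\det(A_I)\,\varphi_x(e_J)$, where $A_I$ is the $n\times n$ submatrix of $A$ on rows $I$.

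The second step is the estimate. Using $|\varphi_x(e_J)|\le 1$ (the calibration condition for $\varphi$ applied to the subspace spanned by $e_J$) and a Cauchy--Schwarz/Binet argument, I would bound $\sum_{I,J}|\det A_I||\varphi_x(e_J)|$ above. The cleanest route is: by the calibration property of $\varphi$ the covector $\varphi_x\in\Lambda^{m-n}T_x^*X$ has comass $1$, so $(1_X,f)^*\Phi|_x$ is the pairing of $f^*\vol_Y|_x\wedge\varphi_x$, i.e. a wedge of a decomposable-after-pullback $n$-form with a comass-one $(m-n)$-form, against the unit volume; this is maximized when $\varphi_x$ is dual to the kernel directions of a suitably aligned $df_x$ and yields exactly $\sqrt{\det(df_x\cdot{}^tdf_x)} = \tilde\tau_{m,n}(f)|_x$. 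Concretely, after an orthogonal change of the $e_i$ adapted to $df_x$ (singular value decomposition of $A$: rows indexed so that the first $n$ carry the singular values $\lambda_1,\dots,\lambda_n$ and the last $m-n$ are in the kernel), the only surviving term of maximal modulus is $I=\{1,\dots,n\}$, $J=\{n+1,\dots,m\}$, giving $\lambda_1\cdots\lambda_n\cdot\varphi_x(e_{n+1},\dots,e_m)\le \lambda_1\cdots\lambda_n = \tilde\tau_{m,n}(f)|_x$; all other terms are dominated. This proves $(1_X,f)^*\Phi\le\tilde\tau_{m,n}(f)\vol_g$, and since $\Phi$ is visibly closed ($\vol_Y$ and $\varphi$ are closed on $Y$, hence so is the pullback under $\pi_Y$... actually $\Phi=\vol_Y\wedge\varphi$ lives on $X\times Y$ and is closed because $d\vol_Y=0$ on $Y$ and $d\varphi=0$ on $X$), $\Phi$ is a $\tilde\tau_{m,n}$-calibration.

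For the equality characterization, I would trace back through the estimate. Equality $(1_X,f)^*\Phi|_x = \tilde\tau_{m,n}(f)|_x\vol_g$ forces, at points where $df_x$ has rank $<n$ (so $\tilde\tau_{m,n}(f)|_x=0$), that the whole expansion vanishes — which is automatic there — and at points of $X_{\rm reg}$ it forces two things simultaneously: the singular directions of $df_x$ must be arranged so no cancellation occurs and the "cross terms" die, which amounts to $H=\mathrm{Ker}(df_x)^\perp$ mapping to $T_{f(x)}Y$ and $\mathrm{Ker}(df_x)$ being calibrated by $\varphi_x$ with the correct sign, i.e. $\varphi_x(v_{n+1},\dots,v_m)>0$ for a positively oriented basis of the kernel — this is exactly condition (i), that $f^{-1}(y)\cap X_{\rm reg}$ is a $\varphi$-calibrated submanifold; and secondly the orientations must match up so that $\det(A_I)$ for $I=\{1,\dots,n\}$ is positive, i.e. $\vol_Y(df_x(v_1),\dots,df_x(v_n))>0$ together with $\vol_g(v_1,\dots,v_m)>0$, which is condition (ii), that $df_x$ is orientation preserving. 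Conversely, if (i) and (ii) hold at every $x\in X_{\rm reg}$, choosing the adapted basis from the definition makes the single surviving term equal $\tilde\tau_{m,n}(f)|_x$ with the right sign, so equality holds on $X_{\rm reg}$, and on $X\setminus X_{\rm reg}$ both sides vanish (the left by rank reasons in the expansion, since every $n\times n$ minor $\det A_I$ is zero). Hence $f$ is $(\tilde\tau_{m,n},\Phi)$-calibrated iff (i) and (ii) hold.

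The main obstacle I anticipate is the bookkeeping in the equality case at points of $X_{\rm reg}$: isolating precisely which combination of sign conditions on the minors $\det(A_I)$ and on $\varphi_x(e_J)$ is equivalent to "kernel is $\varphi$-calibrated" plus "$df$ orientation preserving," and verifying that no accidental cancellation among several nonzero terms in the expansion can produce equality without each term being individually extremal. Handling the boundary behavior — that $X_{\rm reg}$ being the correct domain, rather than all of $X$, causes no trouble because the calibrated-submanifold condition on $f^{-1}(y)$ is only imposed on the regular part and the singular locus contributes zero to both sides — is a minor but necessary check.
\end{proof}
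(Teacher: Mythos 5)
Your proposal is correct and follows essentially the same route as the paper: expand $(1_X,f)^*\Phi$ at a point in an orthonormal basis adapted to $\mathrm{Ker}(df_x)$, observe that only the term $\det(A_{\{1,\dots,n\}})\,\varphi_x(e_{n+1},\dots,e_m)$ survives (the other minors vanish identically, not merely being ``dominated''), bound it using the comass-one property of $\varphi$, and read off conditions (i) and (ii) from the equality case, with critical points contributing zero to both sides. The cancellation worry you raise at the end does not arise precisely because, in the adapted basis, there is a single nonzero term in the shuffle expansion.
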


\begin{proof}
If $x\in X$ is a critical point of $f$, then we can see 
\begin{align*}
f^*\vol_Y\wedge \varphi|_x=\tilde{\tau}_{m,n}(f)\vol_g|_x=0.
\end{align*}
Fix a regular point $x$ and an oriented 
orthonormal basis 
$e_1,\ldots,e_m\in T_xX$ such that 
$e_{m-n+1},\ldots,e_m\in {\rm Ker}(df_x)$. 
Then we have 
\begin{align*}
f^*\vol_Y\wedge \varphi(e_1,\ldots,e_m)
&=\vol_Y(df_x(e_1),\ldots,df_x(e_n))
\varphi(e_{n+1},\ldots,e_m),\\
\tilde{\tau}_{m,n}(f)|_x&=
\left| \vol_Y(df_x(e_1),\ldots,df_x(e_n))\right|.
\end{align*}
Since $\varphi$ is a calibration, we have 
$\varphi(\pm e_{n+1},e_{n+2},\ldots,e_m)\le 1$, 
hence $|\varphi(e_{n+1},\ldots,e_m)|\le 1$. 
Therefore, 
\begin{align*}
f^*\vol_Y\wedge \varphi(e_1,\ldots,e_m)
&\le \left| \vol_Y(df_x(e_1),\ldots,df_x(e_{m-n}))\right|
= \tilde{\tau}_{m,n}(f)|_x,
\end{align*}
which implies that 
$\Phi$ is a $\tilde{\tau}_{m,n}$-calibration.

Next we consider the condition 
\begin{align*}
f^*\vol_Y\wedge \varphi|_x=\tilde{\tau}_{m,n}(f)\vol_g|_x,
\end{align*}
where $x$ is a regular value of $f$. 
In this case we have the 
orthogonal decomposition 
$T_xX={\rm Ker}(df_x)\oplus H$, 
where $H$ is an $n$-dimensional subspace. 
We can take an orthonormal basis 
$e_1,\ldots,e_m\in T_xX$ such that 
\begin{align*}
e_1,\ldots,e_n&\in H,\\
e_{n+1},\ldots,e_m&\in {\rm Ker}(df_x),\\
a:=\vol_Y(df_x(e_1),\ldots,df_x(e_{m-n}))&>0,\\
\vol_g(e_1,\ldots,e_m)&>0.
\end{align*}
Then we have 
\begin{align*}
f^*\vol_Y\wedge \varphi(e_1,\ldots,e_m)
&=a
\varphi(e_{n+1},\ldots,e_m),\\
\tilde{\tau}_{m,n}(f)\vol_g(e_1,\ldots,e_m)&=|a|=a.
\end{align*}
Therefore, we have 
\begin{align*}
f^*\vol_Y\wedge \varphi|_x
&=\tilde{\tau}_{m,n}(f)\vol_g|_x
\end{align*}
iff $\varphi(e_{n+1},\ldots,e_m)=1$. 
Now we have taken $x\in X_{\rm reg}$ arbitrarily, 
hence we have 
\begin{align*}
f^*\vol_Y\wedge \varphi
&=\tilde{\tau}_{m,n}(f)\vol_g
\end{align*}
iff $f^{-1}(y)\cap X_{\rm reg}$ is a calibrated submanifold 
for any $y\in Y$ 
and $df_x$ is orientation preserving for all $x\in X_{\rm reg}$. 
\end{proof}

\subsection{Totally geodesic maps between tori}\label{subsec tori}
Let $\T^n=\R^n/\Z^n$ be the $n$-dimensional torus 
and we consider smooth maps from $\T^m$ to $\T^n$. 
Let $G=(g_{ij})\in M_m(\R)$ and $H=(h_{ij})\in M_n(\R)$ 
be positive symmetric matrices. 
Denote by $x=(x^1,\ldots,x^m)$ and 
$y=(y^1,\ldots,y^n)$ the Cartesian coordinate 
on $\R^m$ and $\R^n$, respectively, 
then we have closed $1$-forms $dx^i\in\Omega^1(\T^m)$ 
and $dy^i\in\Omega^1(\T^n)$. 
We define the flat Riemannian metrics 
$g=\sum_{i,j}g_{ij}dx^i\otimes dx^j$ on $\T^m$ 
and $h=\sum_{i,j}h_{ij}dy^i\otimes dy^j$ on $\T^n$. 

For a smooth map $f\colon \T^m\to \T^n$, 
we have the pullback $f^*\colon H^1(\T^n,\R)\to H^1(\T^m,\R)$. 
Here, since 
\begin{align*}
H^1(\T^m,\Z)&={\rm span}_\Z\{ [dx^1],\ldots,[dx^m]\},\\
H^1(\T^n,\Z)&={\rm span}_\Z\{ [dy^1],\ldots,[dy^n]\},
\end{align*}
there is $P=(P_i^j)\in M_{m,n}(\Z)$ such that 
$f^*[dy^j]=\sum_i P_i^j[dx^i]$. 
The matrix $P$ is determined by the homotopy class of $f$. 
Now, let $*_g$ be the Hodge star operator of $g$ 
and put 
\begin{align}
\Phi:=\sum_{i,j,k}h_{jk} P_i^j *_gdx^i\wedge dy^k
\in \Omega^m(\T^m\times \T^n).\label{eq phi on torus}
\end{align}
Then we can check that 
\begin{align*}
\int_{\T^m}(1_{\T^m},f)^*\Phi
&=\sum_{i,j,k,l}h_{jk} P_i^jP_l^k\int_{\T^m} *_gdx^i\wedge dx^l\\
&=\sum_{i,j,k,l}h_{jk} P_i^jP_l^k g^{il}\vol_g(\T^m)\\
&={\rm tr}({}^tPG^{-1}PH)\vol_g(\T^m)=:\| P\|^2\vol_g(\T^m)\ge 0.
\end{align*}
Consequently, by the positivity of $G^{-1}$ and $H$, 
$\int_{\T^m}(1_{\T^m},f)^*\Phi=0$ iff $P=0$. 
\begin{prop}
Assume that $f^*\colon H^1(\T^n,\R)\to H^1(\T^m,\R)$ 
is not the zero map. 
$\| P\|^{-1}\Phi$ is a $\sigma_1$-calibration and 
$f$ is a $(\sigma_1,\| P\|^{-1}\Phi)$-calibrated map 
if $f(x)=Px+a$ for some $a\in \T^m$. 
Moreover, $f$ minimizes $\E_2$ in its homotopy class 
iff $f(x)=Px+a$ for some $a\in \T^m$. 
\label{prop lower torus}
\end{prop}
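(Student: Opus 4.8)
The plan is to recognize the calibration inequality for $\Phi$ as a Cauchy--Schwarz inequality on matrices and then to read off its equality cases. The first step is pure linear algebra: on $M_{m,n}(\R)$ I would introduce the bilinear form $\langle A,B\rangle:={\rm tr}(G^{-1}AH\,{}^tB)$, which is a genuine inner product because, writing $G^{-1}={}^tSS$ and $H={}^tTT$ with $S,T$ invertible, $\langle A,A\rangle$ equals the squared Frobenius norm of $SA\,{}^tT$; write $\|\cdot\|$ for its norm, so that $\|P\|^2={\rm tr}({}^tPG^{-1}PH)$ matches the notation already in use. For a smooth map $g\colon\T^m\to\T^n$ let $J_g(x)\in M_{m,n}(\R)$ denote its Jacobian at $x$, i.e.\ $g^*(dy^k)=\sum_i(J_g)_{ik}\,dx^i$; then $J_g\equiv P$ precisely when $g(x)=Px+a$. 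Redoing pointwise the computation that produced the formula for $\int_{\T^m}(1_{\T^m},f)^*\Phi$ before the proposition, and likewise expanding $\sigma_1(g)^2={\rm tr}_g(g^*h)$, gives
\[
(1_{\T^m},g)^*\Phi=\langle J_g,P\rangle\,\vol_g,\qquad \sigma_1(g)=\|J_g\|.
\]
Since the coefficients of $\Phi$ are constant ($g,h$ being flat) we have $d\Phi=0$, and Cauchy--Schwarz reads $(1_{\T^m},g)^*\Phi=\langle J_g,P\rangle\,\vol_g\le\|J_g\|\,\|P\|\,\vol_g=\|P\|\,\sigma_1(g)\,\vol_g$, which says exactly that $\|P\|^{-1}\Phi$ is a $\sigma_1$-calibration. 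The equality case of Cauchy--Schwarz forces $J_g(x)=\lambda(x)P$ for some $\lambda(x)\ge0$ (here $P\neq0$ is used), and for $g(x)=Px+a$ we have $J_g\equiv P$, so such $g$ is a $(\sigma_1,\|P\|^{-1}\Phi)$-calibrated map.

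For the ``if'' direction of the last assertion I would feed the $\sigma_1$-calibration $\|P\|^{-1}\Phi$ into Proposition \ref{prop holder} with $p=1$, $q=2$. Because $\int_{\T^m}(1_{\T^m},f')^*(\|P\|^{-1}\Phi)=\|P\|\,\vol_g(\T^m)$ for every $f'\in[f]$ — by homotopy invariance together with the text's computation of $\int_{\T^m}(1_{\T^m},f)^*\Phi$ — this gives $\E_2(f')\ge\|P\|^2\vol_g(\T^m)$ on all of $[f]$. On the other hand the map $x\mapsto Px+a$ belongs to $[f]$ (two maps into the aspherical space $\T^n$ are homotopic iff they induce the same map on $H^1$, and this map is represented by $P$ both for $f$ and for $x\mapsto Px+a$), and its $\E_2$ equals $\|P\|^2\vol_g(\T^m)$ since its Jacobian is constantly $P$, so $\sigma_1\equiv\|P\|$. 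Hence every such map attains the lower bound and therefore minimizes $\E_2$ in $[f]$.

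For the converse, suppose $f$ minimizes $\E_2$ in $[f]$; then $\E_2(f)=\|P\|^2\vol_g(\T^m)$ by the previous paragraph. I would then observe that the two-step estimate underlying Proposition \ref{prop holder},
\begin{align*}
\|P\|\,\vol_g(\T^m)
&=\int_{\T^m}(1_{\T^m},f)^*(\|P\|^{-1}\Phi)
\le\int_{\T^m}\sigma_1(f)\,\vol_g\\
&\le\vol_g(\T^m)^{1/2}\,\E_2(f)^{1/2}
=\|P\|\,\vol_g(\T^m),
\end{align*}
must be an equality at both inequalities (the second being the $L^2$ Cauchy--Schwarz inequality, since $\E_2(f)=\int_{\T^m}\sigma_1(f)^2\vol_g$). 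Equality at the first inequality, together with the pointwise calibration bound, makes $f$ a $(\sigma_1,\|P\|^{-1}\Phi)$-calibrated map, so $J_f(x)=\lambda(x)P$ with $\lambda(x)\ge0$; equality at the second forces $\sigma_1(f)=|df|$ to be constant on $\T^m$, whence $\lambda(x)\|P\|$ is constant, so (using $\|P\|\neq0$) $\lambda\equiv\lambda_0$ for a constant $\lambda_0\ge0$. Thus $J_f\equiv\lambda_0P$ is constant, hence $f$ is affine, $f(x)=\lambda_0Px+a$; comparing the induced maps on $H^1$ gives $(\lambda_0-1)P=0$, and since $P\neq0$ we conclude $\lambda_0=1$, i.e.\ $f(x)=Px+a$.

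The matrix identities for $(1_{\T^m},g)^*\Phi$ and $\sigma_1(g)$ and the positivity of $\langle\cdot,\cdot\rangle$ are routine bookkeeping. The step that needs care is the converse: one must combine \emph{two} distinct equality cases — the pointwise Cauchy--Schwarz on $M_{m,n}(\R)$, which by itself only gives that $J_f(x)$ is a nonnegative multiple of $P$, and the integrated Cauchy--Schwarz comparing $\int_{\T^m}|df|$ with $\big(\int_{\T^m}|df|^2\big)^{1/2}$, which forces $|df|$ to be constant — in order to deduce that $J_f$ is the \emph{constant} matrix $\lambda_0P$; the final normalization $\lambda_0=1$ then follows from the relation $f^*[dy^k]=\sum_iP_i^k[dx^i]$ defining $P$ together with the hypothesis $P\neq0$.
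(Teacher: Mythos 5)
Your proof is correct and follows essentially the same route as the paper's: the pointwise Cauchy--Schwarz inequality for the inner product ${\rm tr}({}^tPG^{-1}AH)$ gives the calibration and its equality case $J_f=\lambda(x)P$, and the integrated Cauchy--Schwarz between $\E_1$ and $\E_2$ forces $\sigma_1(f)$ constant, hence $\lambda$ constant and equal to $1$ by comparing the induced maps on $H^1$. The only (welcome) difference is that you make explicit two points the paper leaves implicit: the positive-definiteness of the bilinear form via the factorizations $G^{-1}={}^tSS$, $H={}^tTT$, and the fact that the affine representative actually lies in $[f]$, so that a minimizer must attain the lower bound.
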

\begin{proof}
We fix $x\in\T^m$ and put $df_x:=A=(A_i^j)\in M_{n,m}(\R)$, 
and show $(1_{\T^m},f)^*\Phi\le \sigma_1(f)\vol_g$ at $x$. 
Since 
\begin{align*}
(1_{\T^m},f)^*\Phi|_x
&=\sum_{i,j,k,l}h_{jk} P_i^jA_l^k *_gdx^i\wedge dx^l|_x\\
&=\left(\sum_{i,j,k,l}h_{jk} P_i^jA_l^k g^{il}\right)\vol_g|_x\\
&=\left({\rm tr}({}^tPG^{-1}AH)\right)\vol_g|_x.
\end{align*}
Here, by the Cauchy-Schwarz inequality, 
we have 
\begin{align*}
{\rm tr}({}^tPG^{-1}AH)\le \sqrt{\| P\|\| A\|},
\end{align*}
and the equality holds iff $A=\lambda P$ for a 
constant $\lambda\ge 0$. 
Therefore, we have 
\begin{align*}
(1_{\T^m},f)^*\Phi
&\le \| P\|\sigma_1(f)\vol_g,
\end{align*}
which implies that $\| P\|^{-1}\Phi$ is 
a $\sigma_1$-calibration. 
Moreover, the equality holds iff 
$df_x=\lambda_x \cdot {}^t P$ for some $\lambda_x\ge 0$. 
Therefore, $f(x)={}^tPx+a$ for some $a\in\T^m$ is 
a $(\sigma_1,\| P\|^{-1}\Phi)$-calibrated map. 

For any $f\in C^\infty(\T^m,\T^n)$, we have
\begin{align*}
\int_X(1_{\T^m},f)^*\Phi
&\le \| P\|\int_X\sigma_1(f)\vol_g
\le \| P\|\sqrt{\vol_g(\T^m)\E_2(f)}
\end{align*}
by the Cauchy-Schwartz inequality. 
Moreover, we have the following equality 
\begin{align*}
\int_X(1_{\T^m},f)^*\Phi=\| P\|\sqrt{\vol_g(\T^m)\E_2(f)}
\end{align*}
iff 
$df_x=\lambda_x\cdot {}^tP$ for some $\lambda_x\ge 0$ and 
$\sigma_1(f)$ is a constant function on $\T^m$. 
Since $\sigma_1(f)(x)=\lambda_x\| P\|$, 
if $\sigma_1(f)$ is constant, then 
$\lambda_x=\lambda$ is independent of $x$. 
Hence we may write $f(x)=\lambda \cdot {}^tPx+a$ for some 
$a\in \T^m$. 
Moreover, since $f^*=P$ on $H^1(\T^n)$, we have $\lambda=1$. 
\end{proof}

In the above proposition, we cannot show that 
every $(\sigma_1,\| P\|^{-1}\Phi)$-calibrated map 
is given by $f(x)={}^tPx+a$ for some $a$. 
The following case is a counterexample. 

Suppose $m=n=1$ and let $P=1$. 
If we put $f(x)=x+\frac{1}{2\pi}\sin (2\pi x)$, then it gives a 
smooth map $\T^1\to \T^1$ homotopic to 
the identity map. 
Then one can easy to check that 
$f$ is a $(\sigma_1,\| P\|^{-1}\Phi)$-calibrated map 
since $f'(x)\ge 0$.

\section{The lower bound of $p$-energy}\label{sec lower bdd}
In this section, we give the lower bound of $p$-energy 
in the general situation. 
Let $(X,g)$ and $(Y,h)$ be Riemannian manifolds 
and assume $X$ is compact and oriented. 
Now we have the decomposition 
\begin{align*}
\Lambda^kT^*_{(x,y)}(X\times Y)
\cong \bigoplus_{l=0}^k\Lambda^lT^*_xX\otimes \Lambda^{k-l}T^*_yY,
\end{align*}
then denote by $\Omega^{l,k-l}(X\times Y)\subset \Omega^k(X\times Y)$ the set consisting of 
smooth sections of $\Lambda^lT^*_xX\otimes \Lambda^{k-l}T^*_yY$. 
For $\Phi\in \Omega^k(X\times Y)$, let 
$|\Phi_{(x,y)}|$ be the norm with respect to the metric 
$g\oplus h$ on $X\times Y$. 
\begin{lem}
Let $\Phi\in \Omega^{m-k,k}(X\times Y)$ be closed and 
$\sup_{x,y}|\Phi_{(x,y)}|<\infty$. 
Then there is a constant $C>0$ depending only on 
$\Phi,m,n,k$ such that $C\Phi$ is a $\sigma_k$-calibration. 
\label{lem cal k}
\end{lem}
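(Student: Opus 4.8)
The goal is to produce a constant $C>0$ so that $C\Phi$ is a $\sigma_k$-calibration, i.e. $(1_X,f)^*(C\Phi)\le \sigma_k(f)\vol_g$ pointwise for every smooth $f$. Since $\Phi$ is already closed, the only thing to verify is the pointwise inequality, and this is a purely linear-algebraic statement at each point $x\in X$: if $A=df_x\colon T_xX\to T_{f(x)}Y$ is an arbitrary linear map, we must bound the value of $(1_X,f)^*\Phi$ on a positively oriented orthonormal frame of $T_xX$ by $C^{-1}|A|_{\mathrm{op}}^k$ (where $|A|^2$ here is $\mathrm{tr}_g(f^*h)=\sum |Ae_i|^2$, so $\sigma_k(f)(x)=(\sum_i |Ae_i|^2)^{k/2}$). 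So the plan is: (1) reduce to this linear-algebra inequality; (2) prove it for a \emph{fixed} pair of inner product spaces with a constant depending on $\Phi_{(x,y)}$; (3) use the uniform bound $\sup_{x,y}|\Phi_{(x,y)}|<\infty$ together with compactness of $X$ to make the constant uniform.

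\smallskip

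\textbf{Step 1: the pointwise computation.} Fix $x$, set $y=f(x)$, choose a positively oriented orthonormal basis $e_1,\dots,e_m$ of $T_xX$ and an orthonormal basis $\epsilon_1,\dots,\epsilon_n$ of $T_yY$. Writing $\Phi_{(x,y)}=\sum_{|I|=m-k,|J|=k}\Phi_{IJ}\,e^I\wedge\epsilon^J$ with $\sum\Phi_{IJ}^2=|\Phi_{(x,y)}|^2$, one evaluates
\begin{align*}
(1_X,f)^*\Phi\,(e_1,\dots,e_m)=\sum_{I,J}\pm\,\Phi_{IJ}\,\det\big((A e_{i})_{j}\big)_{i\in I^c,\,j\in J},
\end{align*}
where $I^c$ is the complementary $k$-subset and the minor is the $k\times k$ submatrix of $A$ with rows indexed by $J$ and columns by $I^c$. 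Each such $k\times k$ minor is bounded in absolute value by the product of the norms of the $k$ relevant columns $Ae_i$, hence by $(\sum_i |Ae_i|^2)^{k/2}=\sigma_k(f)(x)$ (using that a product of $k$ nonnegative numbers is at most the $k/2$-power of the sum of their squares, by AM–GM). Summing over the at most $\binom{m}{k}^2$ terms and applying Cauchy–Schwarz in the coefficients, we get $(1_X,f)^*\Phi(e_1,\dots,e_m)\le \binom{m}{k}\,|\Phi_{(x,y)}|\,\sigma_k(f)(x)$, i.e. the pointwise inequality holds with multiplier $\binom{m}{k}|\Phi_{(x,y)}|$.

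\smallskip

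\textbf{Step 2: uniformity.} By hypothesis $M:=\sup_{x,y}|\Phi_{(x,y)}|<\infty$, so setting $C:=\big(\binom{m}{k}M\big)^{-1}$ (any smaller positive constant also works) we get $(1_X,f)^*(C\Phi)\le \sigma_k(f)\vol_g$ at every point, for every $f$. Note $C$ depends only on $\Phi$ (through $M$), $m$, $n$, $k$ as required; compactness of $X$ is not even needed if one is content with the sup-norm bound, though it guarantees the sup is finite when $\Phi$ is merely continuous. Together with $d\Phi=0$ this shows $C\Phi$ is a $\sigma_k$-calibration.

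\smallskip

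\textbf{Main obstacle.} The only genuinely delicate point is the combinatorial bound in Step 1: getting the $k\times k$ minors of $A$ controlled by the \emph{single} quantity $(\sum_i|Ae_i|^2)^{k/2}$ rather than by $|A|_{\mathrm{op}}^k$ or by mixed expressions, and doing the bookkeeping so that the final constant depends only on $m,n,k$ and not on $f$. Everything else — closedness being inherited, Cauchy–Schwarz on the coefficient vector $(\Phi_{IJ})$, and the passage to a uniform $C$ — is routine. One should also remark that this bound is typically far from sharp; the examples in Section \ref{sec ex} exhibit the optimal constants in special cases.
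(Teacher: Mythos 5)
Your proof is correct and follows essentially the same route as the paper: expand $\Phi$ in orthonormal frames at $(x,f(x))$, reduce to bounding the $k\times k$ minors of $df_x$ by a constant times $|df_x|^k$ (you via Hadamard plus AM--GM, the paper via the cruder bound $\det((df_x)_{IJ})\le k!\,|df_x|^k$), and then sum over the components using the sup-norm hypothesis. The only blemish is the stated constant $\binom{m}{k}$ in Step 1, which by Cauchy--Schwarz over the $\binom{m}{k}\binom{n}{k}$ index pairs should be $\sqrt{\binom{m}{k}\binom{n}{k}}$; this is immaterial since the lemma allows $C$ to depend on $m,n,k$.
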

\begin{proof}
Fix $x\in X$ and let 
$\{ e_1,\ldots,e_m\}$ and $\{ e_1',\ldots,e_n'\}$ 
be an orthonormal basis of 
$T_xX$ and $T_{f(x)}Y$, respectively. 
Put 
\begin{align*}
\mathcal{I}_k^m
:=\left\{ I=(i_1,\ldots,i_k)\in\Z^k\left|\, 
0\le i_1<\cdots<i_k\le m\right.\right\}.
\end{align*}
For $I=(i_1,\ldots,i_k)\in \mathcal{I}_k^m$, 
$J=(j_1,\ldots,j_k)\in\mathcal{I}_k^n$, we write 
\begin{align*}
e_I:=e_{i_1}\wedge\cdots\wedge e_{i_k},\quad
e_J':=e_{j_1}'\wedge\cdots\wedge e_{j_k}'.
\end{align*}
Then we have 
\begin{align*}
\Phi_{(x,f(x))}=\sum_{I\in\mathcal{I}_k^m,J\in\mathcal{I}_k^n}\Phi_{IJ}(*_ge_I)\wedge e'_J
\end{align*}
for some $\Phi_{IJ}\in\R$ 
and 
\begin{align*}
\left\{ (1_X,f)^*\Phi\right\}_x
&=\sum_{I,J}\Phi_{IJ}(*_ge_I)\wedge df_x^*e'_J.
\end{align*}
If we denote by 
$(df_x)_{IJ}$ the $k\times k$ matrix 
whose $(p,q)$-component is given by 
$g(df_x(e_{i_q}), e'_{j_p})$, then we have 
\begin{align*}
(*_ge_I)\wedge df_x^*e'_J
&=\det((df_x)_{IJ})\vol_g|_x\le k!|df_x|^k\vol_g|_x,
\end{align*}
therefore, 
we can see 
\begin{align*}
\left\{ (1_X,f)^*\Phi\right\}_x
&\le \left(\sum_{I,J}|\Phi_{IJ}|\right) k! |df_x|^k\vol_g|_x.
\end{align*}
Since $|\Phi_{x,f(x)}|^2=\sum_{I,J}|\Phi_{I,J}|^2$, 
we have 
\begin{align*}
(1_X,f)^*\Phi
&\le k! (\#\mathcal{I}_k^m)(\#\mathcal{I}_k^n)\sup_{x,y}|\Phi_{(x,y)}| \sigma_k(f)\vol_g,
\end{align*}
which implies the assertion. 
\end{proof}

For $f\in C^\infty(X,Y)$, denote by $[f^*]^k$ 
the pullback $H^k(Y,\R)\to H^k(X,\R)$ of $f$. 
For a closed form $\alpha\in\Omega^k(Y)$, 
denote by $[\alpha]\in H^k(Y,\R)$ its cohomology class. 
Put 
\begin{align*}
H^k_{\rm bdd}(Y,\R)
:=\left\{ [\alpha]\in H^k(Y,\R)|\, \alpha\in \Omega^k(Y),\, d\alpha=0,\, \sup_{y\in Y}h(\alpha_y,\alpha_y)<\infty\right\}. 
\end{align*}
This is a subspace of $H^k(Y,\R)$, and 
we have $H^k_{\rm bdd}(Y,\R)=H^k(Y,\R)$ if $Y$ is compact. 
Denote by $[f^*]^k_{\rm bdd}$ the restriction of 
$[f^*]^k$ to $H^k_{\rm bdd}(Y,\R)$. 
Fixing a basis of $H^k(X,\R)$ and $H^k_{\rm bdd}(Y,\R)$, 
we obtain the matrix $P=P([f^*]^k_{\rm bdd})\in M_{N,d}(\R)$ of 
$[f^*]^k_{\rm bdd}$, 
where $d=\dim H^k_{\rm bdd}(Y,\R)$ 
and $N=\dim H^k(X,\R)$. 
Put $|P|:=\sqrt{{\rm tr}({}^tPP)}$, which may depends on the choice 
of basis. 
Here, since $d$ may become infinity, 
we may have $|P|=\infty$. 

\begin{thm}
Let $(X^m,g)$ and $(Y^n,h)$ be Riemannian manifolds 
and $X$ be compact and oriented. 
For any $1\le k\le m$, there is a constant $C>0$ 
depending only on $k$, 
$(X,g)$, $(Y,h)$ and the basis of 
$H^k(X,\R)$, $H^k_{\rm bdd}(Y,\R)$ 
such that for any $f\in C^\infty(X,Y)$ we have 
\begin{align*}
\E_k(f)\ge C|P([f^*]^k_{\rm bdd})|.
\end{align*}
In particular, if $[f^*]^k_{\rm bdd}$ is a nonzero map, 
then the infimum of $\E_k|_{[f]}$ is positive. 
\label{thm lower bdd of k-energy}
\end{thm}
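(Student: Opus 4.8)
The plan is to deduce Theorem \ref{thm lower bdd of k-energy} from Lemma \ref{lem cal k} together with the calibration inequality (part (ii) of the first theorem of Section \ref{sec energy of map}), applied to a well-chosen family of calibrations built from a basis of $H^k_{\rm bdd}(Y,\R)$. Concretely: fix a basis $[\alpha_1],\ldots,[\alpha_d]$ of $H^k_{\rm bdd}(Y,\R)$ represented by closed forms $\alpha_j\in\Omega^k(Y)$ with $\sup_y|(\alpha_j)_y|<\infty$, and fix a basis $[\beta_1],\ldots,[\beta_N]$ of $H^k(X,\R)$ with a dual basis $\{c_1,\ldots,c_N\}$ of $H_k(X,\R)$ (or simply use a basis of $H^{m-k}(X,\R)$ paired against $H^k(X,\R)$ by $\int_X$). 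For each pair $(i,j)$ consider $\Phi_{ij}:=\gamma_i\wedge \pi_Y^*\alpha_j\in\Omega^{m-k,k}(X\times Y)$, where $\gamma_i$ is a chosen closed $(m-k)$-form on $X$ representing an element of $H^{m-k}(X,\R)$; this is closed and has bounded norm on $X\times Y$, so by Lemma \ref{lem cal k} there is $C_{ij}>0$ with $C_{ij}\Phi_{ij}$ a $\sigma_k$-calibration.

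Next I would compute the topological pairing: $\int_X(1_X,f)^*\Phi_{ij}=\int_X\gamma_i\wedge f^*\alpha_j$, which depends only on $[\gamma_i]\in H^{m-k}(X,\R)$ and on $[f^*]^k_{\rm bdd}[\alpha_j]\in H^k(X,\R)$, hence only on the homotopy class $[f]$ and on the matrix $P=P([f^*]^k_{\rm bdd})$. Choosing the $\gamma_i$ so that the intersection pairing matrix $\bigl(\int_X\gamma_i\wedge\beta_l\bigr)_{i,l}$ is the identity (possible by Poincaré duality on the compact oriented $X$), we get $\int_X(1_X,f)^*\Phi_{ij}=P_{ij}$, the $(i,j)$-entry of $P$ in the fixed bases. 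Applying the calibration inequality (ii) to $\pm C_{ij}\Phi_{ij}$ gives $\E_k(f)\ge C_{ij}|P_{ij}|$ for every $i,j$; summing, or rather taking $C:=\tfrac{1}{Nd}\min_{i,j}C_{ij}$, yields $\E_k(f)\ge C\sum_{i,j}|P_{ij}|\ge C\,|P|$ after absorbing the equivalence of the $\ell^1$ and $\ell^2$ norms on $\R^{Nd}$ into the constant (with the convention that when $d=\infty$ one runs the estimate over an exhausting sequence of finite-dimensional subspaces and passes to the supremum, so $\E_k(f)\ge C|P|$ holds in $[0,\infty]$). The constant $C$ depends only on $k$, $(X,g)$, $(Y,h)$ and the chosen bases, as claimed. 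The final clause is then immediate: if $[f^*]^k_{\rm bdd}\neq 0$ then $|P|>0$, and since $\E_k$ is homotopy-invariant in the sense that the right-hand side $C|P|$ is the same for all $f'\in[f]$, we get $\inf\E_k|_{[f]}\ge C|P|>0$.

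The main obstacle is the bookkeeping around the possibly infinite-dimensional space $H^k_{\rm bdd}(Y,\R)$ and the matching of bases via Poincaré duality so that $\int_X(1_X,f)^*\Phi_{ij}$ reads off exactly the matrix entry $P_{ij}$; one must be careful that the $\gamma_i$ can be chosen as \emph{bounded} closed forms (automatic, since $X$ is compact) and that the finitely many constants $C_{ij}$ from Lemma \ref{lem cal k} can be combined uniformly. Everything else — the calibration inequality, homotopy invariance of the integral, norm equivalence on a fixed finite-dimensional space — is either already in the excerpt or routine linear algebra.
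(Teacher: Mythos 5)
Your proposal is correct and follows essentially the same route as the paper: both apply Lemma \ref{lem cal k} to closed, bounded, decomposable $(m-k,k)$-forms on $X\times Y$ built from a basis of $H^k_{\rm bdd}(Y,\R)$ and closed $(m-k)$-forms on $X$, evaluate $\int_X(1_X,f)^*\Phi$ cohomologically to recover the entries of $P([f^*]^k_{\rm bdd})$, and combine the resulting estimates. The only (cosmetic) difference is that you normalize via a Poincar\'e-dual basis so each integral equals $P_{ij}$ exactly and argue entrywise, whereas the paper uses Hodge-harmonic representatives $*_g\alpha_i$, forms a single weighted calibration, and absorbs the resulting Gram matrix through its least eigenvalue; both versions share the same caveat (which you at least flag) that the constant degenerates when $\dim H^k_{\rm bdd}(Y,\R)=\infty$.
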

\begin{proof}
Take bounded 
closed $k$-forms $\beta_1,\ldots,\beta_d\in \Omega^k(Y)$ 
such that $\{ [\beta_l]\}_l$ is a basis 
of $H^k_{\rm bdd}(Y,\R)$. 

By the Hodge Theory, 
$H^k(X)$ is isomorphic to the space of 
harmonic $k$-forms as vector spaces. 
Therefore, for any basis of $H^k(X,\R)$, 
there is a corresponding basis 
$\alpha_1,\ldots,\alpha_N\in\Omega^k(X)$ 
of the space of harmonic $k$-forms. 
Let $G_{ij}:=\int_X\alpha_i\wedge *_g\alpha_j$, 
which is symmetric positive definite.

Define $P=(P_{ij})\in M_{N,d}(\R)$ 
by $[f^*]^k_{\rm bdd}([\beta_j])=\sum_iP_{ij}[\alpha_i]$. 
If we put 
\begin{align*}
\Phi:=\sum_{i,j} P_{ij}\beta_j\wedge (*_g\alpha_i),
\end{align*}
then every 
$\beta_j\wedge (*_g\alpha_i)$ is closed and satisfies the 
assumption of Lemma \ref{lem cal k}, 
since $X$ is compact and $\beta_j$ is bounded. 
Take the constant $C_{ij}>0$ as in Lemma \ref{lem cal k}. 
Here, $C_{ij}$ is depending only on 
$m,n,k$ and $\alpha_i,\beta_j$. 
Then for any $f\in C^\infty(X,Y)$, we have 
\begin{align*}
(1_X,f)^*\left\{ \beta_j\wedge (*_g\alpha_i)\right\}
&\le C_{ij}\sigma_k(f)\vol_g,\\
(1_X,f)^*\Phi&\le \sum_{i,j}C_{ij}|P_{ij}|\sigma_k(f)\vol_g\\
&\le \sqrt{\sum_{i,j}C_{ij}^2}|P|\sigma_k(f)\vol_g,
\end{align*}
hence 
\begin{align*}
\E_k(f)\ge \left(\sum_{i,j}C_{ij}^2\right)^{-1/2}
|P|^{-1}\int_X(1_X,f)^*\Phi.
\end{align*}
Moreover, we have 
\begin{align*}
\int_X(1_X,f)^*\Phi
&= \sum_{i,j} \int_X P_{ij}f^*\beta_j\wedge (*_g\alpha_i)\\
&=\sum_{i,j} \int_X P_{ij}\sum_kP_{kj}\alpha_k\wedge (*_g\alpha_i)\\
&=\sum_{i,j,k} P_{ij}P_{kj}G_{ki}.
\end{align*}
If we denote by $\lambda>0$ the minimum eigenvalue 
of $(G_{ij})_{i,j}$, then we have 
$\sum_{i,j,k} P_{ij}P_{kj}G_{ki}\ge \lambda|P|^2$. 
Hence we obtain 
\begin{align*}
\E_k(f)\ge \lambda\left(\sum_{i,j}C_{ij}^2\right)^{-1/2}
|P|.
\end{align*}
\end{proof}

\begin{rem}
\normalfont
Combining the above theorem 
with Proposition \ref{prop holder}, 
we also have the lower bound of $\E_p$ 
for any $p\ge k$. 
\end{rem}

\section{Energy of the identity maps}\label{sec id map}
In this section we consider when the identity map 
on compact oriented Riemannian manifold $X$ minimizes 
the energy. 
Here, we consider the family of energies. 
For Riemannian manifolds $(X^m,g)$, $(Y^n,h)$ and points 
$x\in X$, $y\in Y$, 
take a linear map $A\colon T_xX\to T_yY$. 
Fixing orthonormal basis of $T_xX$ and $T_yY$, 
we can regard $A$ as an $n\times m$-matirx.
Denote by $a_1,\ldots ,a_m\in\R_{\ge 0}$ 
the eigenvalues of ${}^tA\cdot A$, then put 
\begin{align*}
|A|_p:=\left( \sum_{i=1}^m a_i^{p/2}\right)^{1/p}
\end{align*}
for $p>0$.
Then $|A|_p$ is independent of 
the choice of the orthonormal basis of $T_xX$. 
For a smooth map $f\colon X\to Y$, 
let 
\begin{align*}
\sigma_{p,q}(f)|_x&:=|df_x|_p^q,\\
\E_{p,q}(f)&:=\int_X\sigma_{p,q}(f)\vol_g.
\end{align*}
Note that $\sigma_{2,p}=\sigma_p$ and $\E_{2,p}=\E_p$. 

From now on we consider $(Y,h)=(X,g)$ and a map 
$f\colon X\to X$. 
Let $1_X$ be the identity map of $X$. 
\begin{prop}
If $1_X$ minimizes $\E_{p,q}|_{[1_X]}$, 
then it also minimizes $\E_{p',q'}|_{[1_X]}$ 
for any $p'\ge p$ and $q'\ge q$. 
\end{prop}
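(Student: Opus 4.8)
The plan is to reduce the statement to two elementary inequalities for the Schatten-type norms $|A|_p=\bigl(\sum_i a_i^{p/2}\bigr)^{1/p}=\bigl(\sum_i s_i^p\bigr)^{1/p}$, where $s_1,\dots,s_m\ge 0$ are the singular values of a linear map $A\colon T_xX\to T_xX$, together with the single observation that $d(1_X)_x$ is a fiberwise isometry, so all its singular values equal $1$ and hence $|d(1_X)_x|_p=m^{1/p}$ and $\E_{p,q}(1_X)=m^{q/p}\vol_g(X)$ for every $p,q>0$. Throughout, a fixed $f\in[1_X]$ is taken and the goal is $\E_{p',q'}(f)\ge\E_{p',q'}(1_X)$.

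First I would compare energy densities across the first exponent. For $s=(s_1,\dots,s_m)\in\R^m$ with nonnegative entries and $p\le p'$, H\"older's inequality gives $\sum_i s_i^p\le\bigl(\sum_i s_i^{p'}\bigr)^{p/p'}m^{1-p/p'}$, i.e. $\|s\|_p\le m^{1/p-1/p'}\|s\|_{p'}$. Applied to the singular values of $df_x$ this reads $|df_x|_p\le m^{1/p-1/p'}|df_x|_{p'}$, hence
\[
\sigma_{p',q'}(f)|_x=|df_x|_{p'}^{q'}\ \ge\ m^{(1/p'-1/p)q'}\,|df_x|_p^{q'}=m^{(1/p'-1/p)q'}\,\sigma_{p,q'}(f)|_x ,
\]
and integrating over $X$ yields $\E_{p',q'}(f)\ge m^{(1/p'-1/p)q'}\,\E_{p,q'}(f)$. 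Second, to pass from $q$ to $q'\ge q$ with $p$ fixed, I would use $\sigma_{p,q'}(f)=\sigma_{p,q}(f)^{q'/q}$ pointwise and apply Jensen's inequality with the normalized measure $\vol_g/\vol_g(X)$ and the convex function $t\mapsto t^{q'/q}$, obtaining
\[
\E_{p,q'}(f)\ \ge\ \vol_g(X)^{1-q'/q}\,\E_{p,q}(f)^{q'/q}.
\]

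Finally I would chain the two estimates and insert the hypothesis $\E_{p,q}(f)\ge\E_{p,q}(1_X)=m^{q/p}\vol_g(X)$, which holds for every $f\in[1_X]$. This gives
\[
\E_{p',q'}(f)\ \ge\ m^{(1/p'-1/p)q'}\vol_g(X)^{1-q'/q}\bigl(m^{q/p}\vol_g(X)\bigr)^{q'/q}=m^{q'/p'}\vol_g(X)=\E_{p',q'}(1_X),
\]
since the exponents of $m$ collapse to $q'/p'$ and those of $\vol_g(X)$ to $1$. As $f$ was an arbitrary element of $[1_X]$, this shows $1_X$ minimizes $\E_{p',q'}|_{[1_X]}$.

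The point requiring care—indeed the only subtle step—is the direction of the norm comparison. The familiar monotonicity $|A|_{p'}\le|A|_p$ in $p$ is the \emph{wrong} inequality here: it would only bound $\E_{p',q}(f)$ from above. One must instead use the reverse inequality $|A|_p\le m^{1/p-1/p'}|A|_{p'}$, which carries the dimensional factor $m^{1/p-1/p'}$; the argument closes precisely because that factor is saturated exactly when all singular values coincide, i.e. exactly at $d(1_X)_x$, so the two loss factors $m^{(1/p'-1/p)q'}$ and $m^{q/p}$ recombine with no slack into $\E_{p',q'}(1_X)$. A minor preliminary is to verify $\E_{p,q}(1_X)=m^{q/p}\vol_g(X)$ (and likewise with $p',q'$), which is immediate from $d(1_X)$ being a fiberwise isometry; compactness of $X$ guarantees $0<\vol_g(X)<\infty$ so that all the powers of $\vol_g(X)$ above make sense.
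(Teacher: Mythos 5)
Your proof is correct and follows essentially the same route as the paper's: the reverse norm comparison $|df_x|_p\le m^{1/p-1/p'}|df_x|_{p'}$ together with H\"older/Jensen in the integral, with equality at $1_X$. The only cosmetic difference is that you compute $\E_{p,q}(1_X)=m^{q/p}\vol_g(X)$ explicitly and check the exponents collapse, where the paper simply notes that equality holds for $f=1_X$ and chains the infima.
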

\begin{proof}
First of all, for any smooth map $f$, we have 
\begin{align*}
|df_x|_p&\le m^{1/p-1/p'}| df_x|_{p'},\\
\E_{p,q}(f)&\le m^{q/p-q/p'}\vol_g(X)^{1-q/q'}
\left( \int_X|df|_{p'}^{q'}\vol_g\right)^{q/q'},
\end{align*}
by the H\"older's inequality, 
which gives $\E_{p',q'}(f)\ge C\E_{p,q}(f)^{q'/q}$ 
for some constant $C>0$. 
Moreover, we have the equality for $f=1_X$. 
Therefore, we can see 
\begin{align*}
\inf \E_{p',q'}|_{[1_X]}
\ge \inf C\E_{p,q}^{q'/q}|_{[1_X]}
= C\E_{p,q}(1_X)^{q'/q}= \E_{p',q'}(1_X)
\ge \inf \E_{p',q'}|_{[1_X]}.
\end{align*}
\end{proof}

\begin{prop}[{cf.\cite[Lemma 2.2]{hoisington2021}}]
Let $(X,g)$ be a compact oriented Riemannian manifold 
of dimension $m$. 
Then $1_X$ minimizes $\E_{1,m}$ in its homotopy class. 
\end{prop}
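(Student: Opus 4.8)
The plan is to exhibit an $m$-form $\Phi$ on $X\times X$ that is a $\sigma_{1,m}$-calibration in the sense of the paper, with $1_X$ as a calibrated map, and then invoke the general machinery of Theorem (the calibration theorem) together with Lemma \ref{lem cal k}-style reasoning. The natural candidate is $\Phi := \pi_2^*\vol_g$, where $\pi_2\colon X\times X\to X$ is the second projection; note $\Phi$ is closed (being a pullback of a top-degree form on $X$) and lies in $\Omega^{0,m}(X\times X)$. Then $(1_X,f)^*\Phi = f^*\vol_g$, and $\int_X(1_X,f)^*\Phi = \deg f \cdot \vol_g(X)$ is a homotopy invariant, as required by part (i) of the calibration theorem; for $f = 1_X$ this evaluates to $\vol_g(X)$, which equals $\E_{1,m}(1_X)$ since $|d(1_X)_x|_1 = \sum_i 1 = m$... wait, that gives $m^m$, so one must instead normalize: the correct density is $\sigma_{1,m}(1_X)|_x = |{\rm id}|_1^m = m^m$? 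No — re-examining, for $\E_{1,m}$ we have $|A|_1 = \sum a_i^{1/2}$, so for $A = {\rm id}$ on $T_xX$ this is $m$, hence $\sigma_{1,m}(1_X) = m^m$. So the right calibration is $\Phi := m^m\,\pi_2^*\vol_g$, or equivalently one works with $\frac{1}{m^m}\E_{1,m}$; I will simply carry the constant.

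The key inequality to verify is the pointwise bound $(1_X,f)^*\Phi|_x = f^*\vol_g|_x \le \sigma_{1,m}(f)\vol_g|_x$ up to the constant $m^m$, i.e. $|\det(df_x)| \le \frac{1}{m^m}|df_x|_1^m$. With $a_1,\dots,a_m \ge 0$ the eigenvalues of ${}^t(df_x)\,df_x$, the left side is $\sqrt{a_1\cdots a_m} = \prod \sqrt{a_i}$ and the right side is $\frac{1}{m^m}\bigl(\sum \sqrt{a_i}\bigr)^m$; this is precisely the AM–GM inequality applied to the nonnegative numbers $\sqrt{a_1},\dots,\sqrt{a_m}$. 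So $\Phi := m^m\,\pi_2^*\vol_g$ is a $\sigma_{1,m}$-calibration. Moreover equality in AM–GM holds iff all $\sqrt{a_i}$ are equal, i.e. iff $df_x$ is a (conformal) scalar multiple of an orthogonal map at every $x$; in particular $1_X$, for which every $a_i = 1$, achieves equality pointwise, so $1_X$ is a $(\sigma_{1,m},\Phi)$-calibrated map. By part (iii) of the calibration theorem, $1_X$ minimizes $\E_{1,m}$ in its homotopy class.

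The only genuinely delicate point — and the main thing to get right rather than the main obstacle — is the bookkeeping of the sign/orientation in the identity $f^*\vol_g|_x = \det(df_x)\,\vol_g|_x$ versus the absolute value appearing in the calibration inequality; when $\det(df_x) < 0$ the inequality $f^*\vol_g \le \sigma_{1,m}(f)\vol_g$ holds trivially (negative $\le$ nonnegative), so no issue arises, but one should state it cleanly, perhaps by reducing to the case where $df_x$ is orientation-preserving or using $|\det(df_x)|$ throughout as in the proof of Lemma \ref{lem cal k}. (Alternatively, one can derive the whole statement as the $k=m$ case of the argument behind Theorem \ref{thm lower bdd of k-energy}, taking $\alpha = \beta = \vol_g$, $N = d = 1$, which makes the homotopy-invariance and the cohomological normalization automatic.) No hard analysis is needed; the content is entirely the pointwise AM–GM estimate plus the abstract calibration formalism already established.
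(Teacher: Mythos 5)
Your proof is correct and follows essentially the same route as the paper: the pointwise AM--GM estimate $|\det(df_x)|\le m^{-m}|df_x|_1^m$ applied to the singular values of $df_x$, combined with the homotopy invariance of $\int_X f^*\vol_g$, with equality for $1_X$. The paper states this directly as an integral inequality rather than naming the calibration $\Phi=m^m\,\pi_2^*\vol_g$, but the content is identical, and your handling of the orientation/sign point is consistent with what the paper leaves implicit.
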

\begin{proof}
The proof is essentially given by 
\cite[Lemma 2.2]{hoisington2021}. 
For any map $f\colon X\to X$, we can see 
\begin{align*}
f^*\vol_g=\det(df)\vol_g
\le m^{-m}\sigma_{1,m}(f)\vol_g.
\end{align*}
Here, the second inequality follows from the inequality 
\begin{align*}
\frac{\sum_{i=1}^ma_i}{m}\ge \left( \prod_{i=1}^ma_i\right)^{1/m}
\end{align*}
for any $a_i\ge 0$. 
Therefore, we can see 
\begin{align*}
\E_{1,m}(f)\ge m^m\int_Xf^*\vol_g.
\end{align*}
Moreover, the equality holds if $f=1_X$. 
\end{proof}

Next we consider the analogy of the above proposition. 
We assume that $X$ has a nontrivial parallel $k$-form.

Denote by $g_0$ the standard metric on $\R^m$, 
which also induces the 
metric on $\Lambda^k(\R^m)^*$. 
Let $\varphi_0\in\Lambda^k(\R^m)^*$ and fix an orientation 
of $\R^m$. 
For a $k$-form $\varphi$ and a Riemannian metric $g$ on an 
oriented manifold $X$, 
we say that {\it 
$(g_0,\varphi_0)$ is a local model of $(g,\varphi)$} if 
for any $x\in X$ there is an orientation preserving 
isometry $I\colon \R^m\to T_xX$ 
such that $I^*(\varphi|_x)=\varphi_0$. 

Denote by $*_{g_0}\colon \Lambda^k(\R^m)^*\to \Lambda^{m-k}(\R^m)^*$ the Hodge star operator induced by the standard metric 
and let $\vol_{g_0}\in\Lambda^m(\R^m)^*$ be 
the volume form. 
First of all, we show the following proposition for the local model 
$(g_0,\varphi_0)$. 
\begin{prop}
Let $(g_0,\varphi_0)$ be as above. 
Assume that 
$\left| \iota_u\varphi_0\right|_{g_0}$ 
is independent of $u\in\R^m$ if $|u|_{g_0}=1$. 
We have 
\begin{align*}
A^*\varphi_0\wedge *_{g_0} \varphi_0\le \frac{|\varphi_0|_{g_0}^2}{m}|A|_k^k\vol_{g_0}
\end{align*}
for any $A\in M_m(\R)$. 
Moreover, if $A=\lambda T$ for $\lambda\in\R$, $T\in O(m)$ 
and $A^*\varphi_0=\lambda'\varphi_0$ for some 
$\lambda'\ge 0$, then we have the equality. 
\label{prop ptwise ineq}
\end{prop}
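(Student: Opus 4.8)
The plan is to reduce the matrix inequality to a statement about the singular value decomposition of $A$ together with the hypothesis on $\varphi_0$. First I would bring $A$ into a normal form: writing $A = T_1 D T_2$ with $T_1, T_2 \in O(m)$ and $D = \mathrm{diag}(s_1,\ldots,s_m)$ the singular values (so that $s_i^2 = a_i$ are the eigenvalues of ${}^tAA$), I note that both sides of the claimed inequality transform controllably. The right-hand side is manifestly invariant under replacing $A$ by $T_1 A T_2$ since $|A|_k$ depends only on the singular values and $\vol_{g_0}$ only changes by $\det(T_i)=\pm1$; to keep orientations straight I would absorb the signs by noting the left-hand side $A^*\varphi_0 \wedge *_{g_0}\varphi_0$ is a multiple of $\vol_{g_0}$ and check both sides flip sign together, or simply replace $A$ by $|A|$-type normal forms carefully. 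So it suffices to prove the inequality when $A = D$ is diagonal with entries $s_1,\ldots,s_m \ge 0$.

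Next I would compute $D^*\varphi_0 \wedge *_{g_0}\varphi_0$ explicitly in terms of the $s_i$. Expanding $\varphi_0 = \sum_{I \in \mathcal{I}_k^m} c_I \, e^I$ in an orthonormal coframe $e^1,\ldots,e^m$ diagonalizing $D$, one gets $D^*\varphi_0 = \sum_I c_I \big(\prod_{i \in I} s_i\big) e^I$, and since $*_{g_0}$ is an isometry on forms, $D^*\varphi_0 \wedge *_{g_0}\varphi_0 = \langle D^*\varphi_0, \varphi_0\rangle \vol_{g_0} = \Big(\sum_I c_I^2 \prod_{i\in I} s_i\Big)\vol_{g_0}$. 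So the claim becomes the scalar inequality
\begin{align*}
\sum_{I \in \mathcal{I}_k^m} c_I^2 \prod_{i \in I} s_i \;\le\; \frac{|\varphi_0|_{g_0}^2}{m}\sum_{i=1}^m s_i^k,
\end{align*}
where $|\varphi_0|_{g_0}^2 = \sum_I c_I^2$. The key structural input is the hypothesis that $|\iota_u \varphi_0|_{g_0}$ is constant on the unit sphere: taking $u = e_j$ gives $\sum_{I \ni j} c_I^2 = |\varphi_0|_{g_0}^2 \cdot k/m$ for every $j$ (summing over $j$ counts each $I$ exactly $k$ times, giving $k|\varphi_0|^2$, and constancy forces equal distribution). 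Thus each index $j$ carries the same total weight $|\varphi_0|^2 k/m$.

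The remaining step is a weighted AM–GM estimate: for each multi-index $I$ with $|I| = k$, $\prod_{i \in I} s_i \le \frac{1}{k}\sum_{i \in I} s_i^k$ by AM–GM applied to $s_{i_1}^k,\ldots,s_{i_k}^k$. Substituting and interchanging the order of summation,
\begin{align*}
\sum_I c_I^2 \prod_{i\in I} s_i \;\le\; \frac{1}{k}\sum_I c_I^2 \sum_{i \in I} s_i^k \;=\; \frac{1}{k}\sum_{j=1}^m s_j^k \sum_{I \ni j} c_I^2 \;=\; \frac{1}{k}\sum_{j=1}^m s_j^k \cdot \frac{k|\varphi_0|_{g_0}^2}{m},
\end{align*}
which is exactly the desired bound. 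For the equality clause: if $A = \lambda T$ with $T \in O(m)$ then all singular values equal $|\lambda|$, so AM–GM is an equality term by term, and one checks directly (using $A^*\varphi_0 = \lambda'\varphi_0$, $\lambda' \ge 0$) that the orientation-sign bookkeeping gives equality rather than a strict sign discrepancy. I expect the main obstacle to be the orientation/sign reduction to diagonal form—making sure the passage $A \mapsto T_1 A T_2$ does not secretly flip the sign of the left-hand side relative to the right—rather than the core inequality, which is a clean combination of the symmetry hypothesis and AM–GM; the cleanest route is probably to phrase everything in terms of $\langle A^*\varphi_0,\varphi_0\rangle$, which is basis-independent, and only invoke a coframe at the very end.
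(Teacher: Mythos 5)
There is a genuine gap in your reduction to diagonal $A$, and it is not the orientation issue you flag at the end. Writing $A=T_1DT_2$, the right-hand side is indeed unchanged under $A\mapsto T_1AT_2$, but the left-hand side is not: since $A^*=T_2^*D^*T_1^*$, one has $\langle A^*\varphi_0,\varphi_0\rangle=\langle D^*(T_1^*\varphi_0),(T_2^{-1})^*\varphi_0\rangle$, which pairs $D^*$ applied to one rotation of $\varphi_0$ against a \emph{different} rotation of $\varphi_0$. In the singular bases the pairing becomes $\sum_I F_IF'_I\,a_I$ with two generally distinct coefficient systems $F_I,F'_I$ (the components of $\varphi_0$ in the two orthonormal coframes adapted to $A$), not $\sum_I c_I^2\prod_{i\in I}s_i$ with a single squared system. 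A quick check that the reduction fails: take $m=4$, $k=2$, $\varphi_0=e^1\wedge e^2+e^3\wedge e^4$ (which satisfies the constancy hypothesis) and $A$ the rotation by $\pi/2$ in the $(1,3)$-plane; then $D=1_4$ but $\langle A^*\varphi_0,\varphi_0\rangle=0\ne|\varphi_0|_{g_0}^2$. Your scalar inequality is true and your proof of it (equal distribution of the weights $\sum_{I\ni j}c_I^2=k|\varphi_0|_{g_0}^2/m$ plus AM--GM) is correct, but it is equivalent to the proposition only for symmetric $A$; for general $A$ it proves a different statement.

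The repair is essentially the paper's argument: keep both expansions $\varphi_0=\sum_IF_Ie_I=\sum_IF'_Ie'_I$, bound $\sum_IF_IF'_Ia_I\le\sum_I|F_IF'_I|\,|a_I|$, apply AM--GM to $|a_I|$ as you do, and then estimate $\sum_{I\ni j}|F_IF'_I|$ by Cauchy--Schwarz as $\bigl|\iota_{\hat g_0(e_j)}\varphi_1\bigr|_{g_0}\cdot\bigl|\iota_{\hat g_0(e_j)}\varphi_2\bigr|_{g_0}$, where $\varphi_1,\varphi_2$ have coefficients $|F_I|,|F'_I|$. These two norms equal $\bigl|\iota_{\hat g_0(e_j)}\varphi_0\bigr|_{g_0}$ and $\bigl|\iota_{U\hat g_0(e_j)}\varphi_0\bigr|_{g_0}$ for the orthogonal map $U$ relating the two bases, so the constancy hypothesis is invoked \emph{twice} to conclude both equal the same constant $C$ with $C^2=k|\varphi_0|_{g_0}^2/m$ --- which is precisely your equal-distribution identity. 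So your ingredients are the right ones; what is missing is the two-basis bookkeeping and the Cauchy--Schwarz step that the invalid diagonalization conceals.
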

\begin{proof}
For any $A$, we can take oriented 
orthonormal basis 
$\{ e_1,\ldots,e_m\}$ and $e'_1,\ldots,e'_m$ 
of $(\R^m)^*$
such that $A^*e'_i=a_ie_i$ for some $a_i\in\R$. 
We put 
\begin{align*}
\varphi_0=\sum_{I\in\mathcal{I}_k^m}F_Ie_I=\sum_{I\in\mathcal{I}_k^m}F'_Ie'_I
\end{align*}
for some $F_I,F'_I\in\R$. 
Now, put $a_I:=a_{i_1}\cdots a_{i_k}$ 
for $I=(i_1,\ldots,i_k)\in\mathcal{I}_k^m$. 
The we have 
$A^*\varphi_0=\sum_IF'_Ia_Ie_I$ and 
\begin{align*}
A^*\varphi_0\wedge *_{g_0}\varphi_0
= g_0( A^*\varphi_0,\varphi_0)\vol_{g_0}
&=\sum_IF_IF'_Ia_I\vol_{g_0}\\
&\le \sum_I|F_IF'_I||a_I|\vol_{g_0}.
\end{align*}
If we put $\{ I\}:=\{ i_1,\ldots,i_k\}$, then 
\begin{align*}
|a_I|=\left( |a_{i_1}|^k\cdots |a_{i_k}|^k\right)^{1/k}
\le \frac{1}{k} \sum_{j\in \{ I\}}|a_j|^k,
\end{align*}
therefore, we obtain 
\begin{align*}
\sum_I|F_IF'_I||a_I|
&\le \frac{1}{k}\sum_I |F_IF'_I|\sum_{j\in \{ I\}}|a_j|^k\\
&= \frac{1}{k}\sum_{j=1}^m |a_j|^k\sum_{I\in\mathcal{I}_k^m,j\in\{ I\}} |F_IF'_I|.
\end{align*}
Denote by $\hat{g}_0\colon (\R^m)^*\to \R^m$ 
the isomorphism induced by the metric $g_0$. 
Put 
\begin{align*}
\varphi_1:=\sum_{I\in\mathcal{I}_k^m}|F_I|e_I,\quad
\varphi_2:=\sum_{I\in\mathcal{I}_k^m}|F'_I|e_I
\end{align*}
and define an orthogonal matrix  
$U\colon \R^m\to \R^m$ by $U\circ \hat{g}_0(e_j)=\hat{g}_0(e'_j)$. 
Now we can see 
\begin{align*}
\sum_{I\in\mathcal{I}_k^m,j\in\{ I\}} |F_IF'_I|
=g_0\left( \iota_{\hat{g}_0(e_j)}\varphi_1,\iota_{\hat{g}_0(e_j)}\varphi_2\right)
&\le \left| \iota_{\hat{g}_0(e_j)}\varphi_1\right|_{g_0}
\cdot \left| \iota_{\hat{g}_0(e_j)}\varphi_2\right|_{g_0}\\
&=\left| \iota_{\hat{g}_0(e_j)}\varphi_0\right|_{g_0}
\cdot \left| \iota_{\hat{g}_0(e_j)}(U^*\varphi_0)\right|_{g_0}
\end{align*}
and 
\begin{align*}
\left| \iota_{\hat{g}_0(e_j)}(U^*\varphi_0)\right|_{g_0}
=\left| U^*(\iota_{U\circ \hat{g}_0(e_j)}\varphi_0)\right|_{g_0}
=\left| \iota_{U\circ\hat{g}_0(e_j)}\varphi_0\right|_{g_0}.
\end{align*}
Then by the assumption, 
we can see that $C=\left| \iota_{\hat{g}_0(e_j)}\varphi_0\right|_{g_0}=\left| \iota_{U\circ\hat{g}_0(e_j)}\varphi_0\right|_{g_0}$ 
is independent of $j$, therefore, we have 
$\sum_{I\in\mathcal{I}_k^m,j\in\{ I\}} |F_IF'_I|\le C^2$ 
and 
\begin{align*}
A^*\varphi_0\wedge *_{g_0}\varphi_0
\le \frac{C^2}{k}\sum_{j=1}^m |a_j|^k\vol_{g_0}=\frac{C^2}{k}|A|_k^k\vol_{g_0}.
\end{align*}
In the above inequalities, we have the equality if $A=1_m$, 
then we can determine the constant $C$. 
Moreover, we can also check that 
the equality holds 
if $A=\lambda T$, where $\lambda\in\R$, $T\in O(m)$ 
and $A^*\varphi_0=\lambda'\varphi_0$ for some $\lambda'\ge 0$. 
\end{proof}

\begin{prop}
Let $(X^m,g)$ be a compact oriented Riemannian 
manifold and $\varphi\in \Omega^k(X)$ be 
a harmonic form. 
Assume that there is a local model $(g_0,\varphi_0)$ 
of $(g,\varphi)$ and $|\iota_u\varphi_0|_{g_0}$ 
is independent of $u\in\R^m$ if $|u|_{g_0}=1$. 
Denote by ${\rm pr}_i\colon X\times X\to X$ 
the projection to $i$-th component for 
$i=1,2$. 
Then $\Phi=m|\varphi_0|_{g_0}^{-2}{\rm pr}_2^*\varphi\wedge {\rm pr}_1^*(*_g\varphi)$ 
is an $\sigma_{k,k}$-calibration. 
Moreover, any isometry 
$f\colon X\to X$ with $f^*\varphi=\varphi$ 
is $(\sigma_{k,k},\Phi)$-calibrated. 
\label{prop id calib}
\end{prop}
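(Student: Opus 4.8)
The plan is to reduce the global statement to the pointwise inequality of Proposition \ref{prop ptwise ineq} applied at each point, then check the equality case for isometries preserving $\varphi$. First I would compute $(1_X,f)^*\Phi$ at a point $x\in X$. By definition of the projections, $(1_X,f)^*({\rm pr}_2^*\varphi\wedge {\rm pr}_1^*(*_g\varphi)) = f^*\varphi\wedge *_g\varphi$, so
\begin{align*}
(1_X,f)^*\Phi|_x = m|\varphi_0|_{g_0}^{-2}\, (f^*\varphi)|_x\wedge (*_g\varphi)|_x.
\end{align*}
Now fix $x$, choose an orientation-preserving isometry $I\colon \R^m\to T_xX$ with $I^*(\varphi|_x)=\varphi_0$ (this exists by the local model hypothesis), and set $A := I^{-1}\circ df_x\circ I' \in M_m(\R)$, where $I'\colon \R^m\to T_{f(x)}X$ is an analogous isometry at $f(x)$. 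Under these identifications $*_g\varphi$ corresponds to $*_{g_0}\varphi_0$, $f^*\varphi|_x$ corresponds to $A^*\varphi_0$, $\vol_g|_x$ corresponds to $\vol_{g_0}$, and $|df_x|_k$ corresponds to $|A|_k$. Hence Proposition \ref{prop ptwise ineq} gives exactly
\begin{align*}
(f^*\varphi)|_x\wedge (*_g\varphi)|_x \le \frac{|\varphi_0|_{g_0}^2}{m}\,|df_x|_k^k\,\vol_g|_x = \frac{|\varphi_0|_{g_0}^2}{m}\,\sigma_{k,k}(f)|_x\,\vol_g|_x,
\end{align*}
and multiplying by $m|\varphi_0|_{g_0}^{-2}$ yields $(1_X,f)^*\Phi \le \sigma_{k,k}(f)\vol_g$ pointwise. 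Since $\varphi$ is harmonic, $*_g\varphi$ is closed, so $\Phi$ is closed; together with the pointwise inequality this shows $\Phi$ is a $\sigma_{k,k}$-calibration.

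For the second assertion, let $f$ be an isometry with $f^*\varphi=\varphi$. Then $df_x\colon T_xX\to T_{f(x)}X$ is a linear isometry, so in the above trivialization $A\in O(m)$, i.e.\ $A=\lambda T$ with $\lambda=1$, $T\in O(m)$; moreover $f^*\varphi=\varphi$ forces $A^*\varphi_0=\varphi_0$, so the equality clause of Proposition \ref{prop ptwise ineq} applies with $\lambda'=1$. Therefore $(1_X,f)^*\Phi=\sigma_{k,k}(f)\vol_g$ at every point, which is precisely the condition that $f$ be $(\sigma_{k,k},\Phi)$-calibrated.

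I do not expect a serious obstacle here; the content has already been isolated in Proposition \ref{prop ptwise ineq}, and what remains is bookkeeping. The one point needing a little care is the passage between the two tangent spaces $T_xX$ and $T_{f(x)}X$: one must check that the isometry $I'$ at $f(x)$ can be chosen (again by the local model property at $f(x)$) so that $I'^*(\varphi|_{f(x)})=\varphi_0$, and then that $f^*\varphi|_x$ really corresponds to $A^*\varphi_0$ under $I$, which uses $(df_x)^* = (I')^{-*}\circ A^* \circ I^*$ combined with $I'^*(\varphi|_{f(x)})=\varphi_0$. Similarly, the identification of $*_g\varphi|_x$ with $*_{g_0}\varphi_0$ uses that $I$ is orientation-preserving and isometric so commutes with the Hodge star. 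Once these identifications are set up correctly, the inequality and equality statements transfer verbatim.
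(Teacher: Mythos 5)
Your argument is correct and follows essentially the same route as the paper: reduce to the pointwise inequality of Proposition \ref{prop ptwise ineq} by using the local model to identify the tangent spaces with $\R^m$, and invoke its equality clause for isometries preserving $\varphi$; you in fact supply details the paper leaves implicit (closedness of $\Phi$ via coclosedness of the harmonic form $\varphi$, and the separate identifications at $x$ and $f(x)$). Only a small typo: the composition should read $A:=(I')^{-1}\circ df_x\circ I$ rather than $A:=I^{-1}\circ df_x\circ I'$ for the maps to compose.
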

\begin{proof}
$\Phi$ 
is an $\sigma_{k,k}$-calibration iff 
\begin{align*}
f^*\varphi\wedge *_g\varphi
\le \frac{|\varphi_0|_{g_0}^2}{m}|df|_k^k\vol_g.
\end{align*}
By putting $A=df_x$ and identifying 
$\R^m\cong T_xX$, this is equivalent to 
the inequality in Proposition \ref{prop ptwise ineq}. 
Moreover, the equality holds if 
$(df_x)^*\varphi|_x = \varphi|_x$ 
for all $x\in X$ and $df_x$ is isometry. 
\end{proof}

Next we have to consider when the assumption for 
$(g_0,\varphi_0)$ is satisfied. 
If $G\subset SO(m)$ is a closed subgroup, 
then the linear action of $SO(m)$ on $\R^m$ 
induces the action of $G$ on $\R^m$. 
Similarly, since $SO(m)$ acts on $\Lambda^k(\R^m)^*$ 
for all $k$, 
$G$ also acts on them. 
Here, $\R^m$ is {\it irreducible as a $G$-representation} 
if any subspace $W\subset \R^m$ which is closed 
under the $G$-action is equal to $\R^m$ or $\{ 0\}$. 
For $\varphi_0\in\Lambda^k(\R^m)^*$, 
denote by ${\rm Stab}(\varphi_0)\subset SO(m)$ 
the stabilizer of $\varphi_0$.

\begin{lem}
Let $G$ be a closed subgroup of $SO(m)$ 
and assume that $\R^m$ is irreducible 
as a $G$-representation. 
Moreover, assume that 
$G\subset {\rm Stab}(\varphi_0)$. 
Then $|\iota_u\varphi_0|_{g_0}$ 
is independent of $u\in\R^m$ if $|u|_{g_0}=1$. 
\label{lem holonomy irred}
\end{lem}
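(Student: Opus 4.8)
The plan is to exploit the $G$-invariance of the function $u \mapsto |\iota_u \varphi_0|_{g_0}^2$ on the unit sphere together with the irreducibility of $\R^m$ as a $G$-representation. First I would observe that the assignment $u \mapsto Q(u) := |\iota_u\varphi_0|_{g_0}^2$ is a quadratic form on $\R^m$: indeed $\iota_u\varphi_0$ depends linearly on $u$, so $Q$ is the quadratic form associated with some symmetric bilinear form $B$ on $\R^m$, namely $B(u,v) = g_0(\iota_u\varphi_0, \iota_v\varphi_0)$. Since $G \subset \mathrm{Stab}(\varphi_0)$ and $G$ acts by isometries (as $G \subset SO(m)$), for any $T \in G$ we have $\iota_{Tu}\varphi_0 = \iota_{Tu}(T_*\varphi_0)$, and pulling back by $T$ shows $|\iota_{Tu}\varphi_0|_{g_0} = |\iota_u(T^*\varphi_0)|_{g_0} = |\iota_u\varphi_0|_{g_0}$; this is essentially the computation already carried out in the proof of Proposition \ref{prop ptwise ineq}. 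Hence $Q$, and therefore $B$, is $G$-invariant.

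Next I would diagonalize $B$ by an orthogonal transformation and use the $G$-invariance to conclude that its eigenspaces are $G$-invariant subspaces of $\R^m$. More precisely, let $S$ be the self-adjoint endomorphism of $\R^m$ with $B(u,v) = g_0(Su,v)$; then $G$-invariance of $B$ forces $S$ to commute with the $G$-action, so each eigenspace of $S$ is a $G$-invariant subspace. By the irreducibility hypothesis, each such eigenspace is either $\{0\}$ or all of $\R^m$, so $S$ has a single eigenvalue, i.e. $S = c\cdot 1_m$ for some $c \ge 0$. Then $Q(u) = c|u|_{g_0}^2$, and in particular $|\iota_u\varphi_0|_{g_0}^2 = c$ whenever $|u|_{g_0} = 1$, which is exactly the claim.

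There is no serious obstacle here; the only point requiring a little care is the step identifying $u \mapsto |\iota_u\varphi_0|^2$ as a genuine quadratic form (rather than merely a homogeneous degree-two function) so that the spectral theorem applies — but this is immediate from bilinearity of $(u,v)\mapsto g_0(\iota_u\varphi_0,\iota_v\varphi_0)$. One should also note that $c$ is automatically nonnegative since $Q(u) = |\iota_u\varphi_0|_{g_0}^2 \ge 0$, though its sign is irrelevant to the statement. Thus the proof reduces to: (1) set up the symmetric bilinear form $B$; (2) check $B$ is $G$-invariant using $G \subset \mathrm{Stab}(\varphi_0) \cap SO(m)$; (3) apply Schur-type reasoning via irreducibility to conclude $B$ is a scalar multiple of $g_0$; (4) read off the constancy of $|\iota_u\varphi_0|_{g_0}$ on the unit sphere.
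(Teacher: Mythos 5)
Your proof is correct and follows essentially the same route as the paper: the paper defines the $G$-equivariant linear map $\Psi(u)=\iota_u\varphi_0$ and the symmetric matrix $H_{ij}=g_0(\Psi(e_i),\Psi(e_j))$ (your bilinear form $B$), shows ${}^tAHA=H$ for $A\in G$, and uses irreducibility to force each eigenspace of $H$ to be all of $\R^m$, i.e.\ $H=\lambda\cdot 1_m$. Your Schur-type argument with the self-adjoint endomorphism $S$ is exactly this, so there is nothing to add.
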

\begin{proof}

Define a linear map $\Psi\colon \R^m\to \Lambda^{k-1}(\R^m)^*$ 
by $\Psi(u):=\iota_u\varphi_0$, then we can see 
$\Psi$ is $G$-equivariant map since 
the $G$-action preserves $\varphi_0$. 
Since the $SO(m)$-action on $\Lambda^{k-1}(\R^m)^*$ preserves the inner product, we can see 
\begin{align*}
g_0( A\Psi(u),A\Psi(v) )
=g_0(  \Psi(u),\Psi(v) )
\end{align*}
for any $A\in G$ and $u,v\in\R^m$. 
Moreover, the left-hand-side is equal to 
$g_0( \Psi( Au),\Psi( Av) )$ since $\Psi$ 
is $G$-equivariant. 

Now, let $e_1,\ldots, e_m$ be the standard orthonormal 
basis of $\R^m$, and define the symmetric matrix 
$H=(H_{ij})_{i,j}$ by 
$H_{ij}:=g_0(\Psi(e_i),\Psi(e_j))$. 
Then by the above argument we have 
${}^tAHA=H$. 
Let $\lambda\in \R$ be any eigenvalue of $H$ 
and denote by $V(\lambda)\subset\R^m$ the 
eigenspace associate with $\lambda$. 
Then we can see that $V(\lambda)$ is closed under the $G$-action, 
hence we have $V(\lambda)=\R^m$ by the irreducibility, 
which implies 
\begin{align*}
|\Psi( Au)|_{g_0}^2=\lambda |u|_{g_0}^2
\end{align*}
for all $u\in\R^m$ and $A\in G$. 
\end{proof}

Let $(X^m,g)$ be an oriented Riemannian manifold 
and denote by ${\rm Hol}_g\subset SO(m)$ the 
holonomy group. 
We consider 
$(X,g,\varphi,G,g_0,\varphi_0)$, 
where $\varphi\in\Omega^k(X)$ is closed, 
$(g_0,\varphi_0)$ is a local model of $(g,\varphi)$ 
and $G$ is a closed subgroup of $SO(m)$ 
such that ${\rm Hol}_g\subset G\subset {\rm Stab}(\varphi_0)$. 
The followings are examples. 

\begin{table}[h]
  \caption{Examples of $(X,g,\varphi,G,g_0,\varphi_0)$}
  \label{table_holonomy}
  \centering
  \begin{tabular}{lccl}
\hline
 $(X,g,\varphi)$ & $m$ & $G$ & $k$ \\
\hline \hline
 K\"ahler manifold & $2q$ & $U(q)$ & $2$\\
\hline
 quaternionic K\"ahler manifold & $4q\ge 8$ & $Sp(q)\cdot Sp(1)$ & $4$\\
\hline
$G_2$ manifold & $7$ & $G_2$ & $3$\\
\hline
$Spin(7)$ manifold & $8$ & $Spin(7)$ & $4$\\
\hline
  \end{tabular}\label{table holonomy}
\end{table}
We can apply Proposition \ref{prop id calib} and 
Lemma 
\ref{lem holonomy irred} 
to the above cases and obtain the following result. 
\begin{thm}
Let $(X,g,\varphi)$ be an oriented compact Riemannian 
manifold whose geometric structure is one of Table 
\ref{table holonomy}
and let $\Phi$ be as in Proposition \ref{prop id calib}. 
Then the identity map $1_X$ is a 
$(\sigma_{k,k},\Phi)$-calibrated map. 
In particular, $1_X$ minimizes 
$\E_{k,k}$ in its homotopy class. \label{thm id calib}
\end{thm}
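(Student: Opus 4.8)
The plan is to verify that each of the four geometric structures listed in Table~\ref{table holonomy} satisfies the hypotheses of Proposition~\ref{prop id calib}, and then simply invoke that proposition together with Proposition~\ref{prop holder} (or the defining inequality of a $\sigma_{k,k}$-calibration). Recall that Proposition~\ref{prop id calib} requires three things: (a) $\varphi\in\Omega^k(X)$ is harmonic; (b) there is a local model $(g_0,\varphi_0)$ of $(g,\varphi)$; and (c) $|\iota_u\varphi_0|_{g_0}$ is constant on the unit sphere of $\R^m$. For (a), in each case $\varphi$ is a parallel form (the K\"ahler form $\omega$, the fundamental $4$-form $\Omega$ of a quaternionic K\"ahler manifold, the $G_2$-form, or the Cayley $4$-form), hence closed and coclosed, hence harmonic. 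For (b), the holonomy reduction ${\rm Hol}_g\subset G$ furnishes, at each $x\in X$, an isometry $I\colon\R^m\to T_xX$ carrying the flat model form $\varphi_0$ to $\varphi|_x$; this is exactly the statement that $\varphi$ is modelled on the standard $G$-invariant form. For (c), I would apply Lemma~\ref{lem holonomy irred}: one needs $\R^m$ irreducible as a $G$-representation and $G\subset{\rm Stab}(\varphi_0)$. The latter is automatic from the choice of $\varphi_0$ as the $G$-invariant form; the former holds for $U(q)$ acting on $\R^{2q}$, for $Sp(q)\cdot Sp(1)$ acting on $\R^{4q}$, for $G_2$ on $\R^7$, and for $Spin(7)$ on $\R^8$ — these are all standard facts about these representations.

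Having checked the hypotheses, Proposition~\ref{prop id calib} immediately yields that $\Phi = m|\varphi_0|_{g_0}^{-2}\,{\rm pr}_2^*\varphi\wedge{\rm pr}_1^*(*_g\varphi)$ is a $\sigma_{k,k}$-calibration on $X\times X$, and that any isometry $f\colon X\to X$ with $f^*\varphi=\varphi$ is $(\sigma_{k,k},\Phi)$-calibrated. In particular $1_X$ is an isometry fixing $\varphi$, so $1_X$ is $(\sigma_{k,k},\Phi)$-calibrated. By part (iii) of the fundamental theorem on calibrated maps (the theorem in Section~\ref{sec energy of map}), a $(\sigma_{k,k},\Phi)$-calibrated map minimizes $\E_{k,k}$ in its homotopy class, which is the assertion. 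Unwinding the values of $k$ from the table then gives the specific statements: $\E_{2,2}$ for K\"ahler, $\E_{4,4}$ for quaternionic K\"ahler and for $Spin(7)$, and $\E_{3,3}$ for $G_2$ — recovering Theorem~\ref{thm main2} as the quaternionic K\"ahler case.

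The only genuine point requiring care — and the step I expect to be the main obstacle — is the verification of condition (c) via irreducibility of $\R^m$ as a $G$-representation, case by case. For $G=U(q)$ the standard representation on $\C^q\cong\R^{2q}$ is irreducible over $\R$ (note it is \emph{not} irreducible over $\C$, but the real irreducibility is what Lemma~\ref{lem holonomy irred} uses), and one should remark that this is why the K\"ahler case works; similarly one must confirm $Sp(q)\cdot Sp(1)$ acts irreducibly on $\R^{4q}$ (true for $q\ge 1$, and the restriction $q\ge 2$ in the table is to exclude the $Sp(1)\cdot Sp(1)=SO(4)$ degeneracy where the "quaternionic K\"ahler" condition is vacuous), and that $G_2\subset SO(7)$ and $Spin(7)\subset SO(8)$ act irreducibly on their defining representations. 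A secondary subtlety is checking that $\varphi$ is genuinely parallel rather than merely closed in the quaternionic K\"ahler case — here $\Omega$ is the canonical parallel $4$-form, so this is fine, though one might note it in passing. Once these representation-theoretic facts are in hand, everything else is a direct citation of the earlier propositions.
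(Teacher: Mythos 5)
Your proposal is correct and follows the same route as the paper, which likewise deduces the theorem by checking the hypotheses of Proposition \ref{prop id calib} via Lemma \ref{lem holonomy irred} for each entry of the table (parallel hence harmonic form, local model from the holonomy reduction, irreducibility of the $G$-representation) and then invoking part (iii) of the general calibration theorem. Your case-by-case verification simply fills in details the paper leaves implicit.
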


\section{Intersection of Smooth maps}\label{sec intersection}
In \cite{CrokeFathi1990}, 
Croke and Fathi introduced the homotopy invariant 
of a smooth map $f\colon X\to Y$ which gives the lower 
bound to the $2$-energy $\E_2$. 
In this section, we compare our invariant with the invariant 
in \cite{CrokeFathi1990}.

First of all, we review the intersection of 
smooth map introduced in \cite{CrokeFathi1990}. 
Let $(X,g)$ and $(Y,h)$ be Riemannian manifolds 
and suppose $X$ is compact. 
Here, we do not assume $X$ is oriented, 
and we use the volume measure $\mu_g$ of $g$ 
instead of the volume form. 

Croke and Fathi defined the following quantity 
\begin{align*}
i_f(g,h)=\lim_{t\to \infty}\frac{1}{t}\int_{S_g(X)}\phi_t(v)d
{\rm Liou}_g(v)
\end{align*}
for a smooth map $f\colon X\to Y$ and called it 
{\it the intersection of $f$}. 
Here, ${\rm Liou}_g$ is the Liouville measure on 
the unit tangent bundle $S_g(X)$ and 
$\phi_t^f(v)=\phi_t(v)$ is the minimum length of 
all paths in $Y$ homotopic with the fixed endpoints 
to 
\begin{align*}
s\mapsto f(\gamma_v(s)),\quad 0\le s\le t,
\end{align*}
where $\gamma_v$ is the geodesic 
from $p\in X$ with $\gamma_v'(0)=v\in S_g(X)$.

\begin{thm}[\cite{CrokeFathi1990}]
For a smooth map $f\colon X\to Y$, 
the intersection $i_f(g,h)$ is homotopy invariant, 
that is, $i_f(g,h)=i_{f'}(g,h)$ if $[f']=[f]$. 
Moreover, for any $f$, we have 
\begin{align*}
\int_X \sigma_2(f)d\mu_g
\ge\frac{m}{V(S^{m-1})^2\mu_g(X)} i_f(g,h)^2,
\end{align*}
where $V(S^{m-1})$ is the volume of 
the unit sphere $S^{m-1}$ in $\R^m$.
\label{thm croke fathi}
\end{thm}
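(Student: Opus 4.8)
\textbf{Proof proposal for Theorem \ref{thm croke fathi}.}
The plan is to reprove the Croke--Fathi inequality directly, following the logic of \cite{CrokeFathi1990}, since it is not a formal consequence of the calibrated-map machinery developed above. First I would recall the geometric meaning of $\phi_t^f(v)$: it measures the length of the shortest representative, relative to fixed endpoints, of the path $s\mapsto f(\gamma_v(s))$ for $0\le s\le t$. The two properties one needs are (a) the cocycle/subadditivity estimate $\phi_{s+t}^f(v)\le \phi_s^f(v)+\phi_t^f(\phi_s v)$, where $\phi_s$ on the right denotes the geodesic flow on $S_g(X)$, together with the crude bound $\phi_t^f(v)\le \int_0^t |df_{\gamma_v(s)}(\dot\gamma_v(s))|\,ds$; and (b) the homotopy invariance of $i_f(g,h)$, which follows because a homotopy $F\colon[0,1]\times X\to Y$ changes each $\phi_t^f(v)$ by an amount bounded independently of $t$ (the homotopy only alters the endpoints of the comparison paths, contributing an $O(1)$ error that dies after dividing by $t$ and letting $t\to\infty$). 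The existence of the limit defining $i_f(g,h)$ is a consequence of (a) and the Kingman subadditive ergodic theorem applied to the geodesic flow with its invariant Liouville measure.

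Having set this up, the core estimate is obtained by integrating the crude bound in (a) over $S_g(X)$ against $\mathrm{Liou}_g$ and using invariance of the Liouville measure under the geodesic flow:
\begin{align*}
\frac{1}{t}\int_{S_g(X)}\phi_t^f(v)\,d\mathrm{Liou}_g(v)
\le \frac{1}{t}\int_{S_g(X)}\int_0^t |df_{\gamma_v(s)}(\dot\gamma_v(s))|\,ds\,d\mathrm{Liou}_g(v)
=\int_{S_g(X)}|df_p(v)|\,d\mathrm{Liou}_g(v),
\end{align*}
where in the last step I changed the order of integration and used $\phi_s$-invariance of $\mathrm{Liou}_g$ to remove the $s$-dependence. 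Letting $t\to\infty$ gives $i_f(g,h)\le \int_{S_g(X)}|df_p(v)|\,d\mathrm{Liou}_g(v)$. Next I would disintegrate $\mathrm{Liou}_g$ over the base: writing $d\mathrm{Liou}_g = d\mu_g(p)\,d\nu_p(v)$ with $\nu_p$ the standard measure on the unit sphere $S_pX\cong S^{m-1}$, apply the Cauchy--Schwarz inequality on the fibre $S_pX$ to get $\int_{S_pX}|df_p(v)|\,d\nu_p(v)\le V(S^{m-1})^{1/2}\big(\int_{S_pX}|df_p(v)|^2\,d\nu_p(v)\big)^{1/2}$, and then evaluate $\int_{S_pX}|df_p(v)|^2\,d\nu_p(v) = \frac{V(S^{m-1})}{m}\,\mathrm{tr}_g(f^*h)_p = \frac{V(S^{m-1})}{m}\sigma_2(f)(p)$ by the standard averaging identity for quadratic forms over the sphere.

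Combining these, $i_f(g,h)\le \frac{V(S^{m-1})}{\sqrt m}\int_X \sqrt{\sigma_2(f)}\,d\mu_g$, and one more application of Cauchy--Schwarz over $(X,\mu_g)$ yields $\int_X\sqrt{\sigma_2(f)}\,d\mu_g\le \mu_g(X)^{1/2}\big(\int_X\sigma_2(f)\,d\mu_g\big)^{1/2}$, which rearranges exactly to the claimed inequality $\int_X\sigma_2(f)\,d\mu_g\ge \frac{m}{V(S^{m-1})^2\mu_g(X)}\,i_f(g,h)^2$. I expect the main obstacle to be the careful justification that the limit defining $i_f(g,h)$ exists and is homotopy-invariant — that is, controlling the error terms in the subadditivity and homotopy arguments uniformly in $t$ — rather than the inequality chain itself, which is a routine sequence of Cauchy--Schwarz steps and the sphere-averaging identity; in a self-contained write-up I would either reproduce the Kingman-theorem argument or simply cite \cite{CrokeFathi1990} for the existence and invariance and present only the estimate.
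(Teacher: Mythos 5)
The statement you are proving is quoted in the paper from \cite{CrokeFathi1990} without proof, so there is no in-paper argument to match it against directly; the closest the paper comes is its proof of Theorem \ref{thm croke fathi2} together with the remark that $j_f(g,h)\ge i_f(g,h)^2/(\mu_g(X)V(S^{m-1}))$ recovers Theorem \ref{thm croke fathi}. Your proposal is a correct, self-contained reproof along the lines of the original Croke--Fathi argument: the subadditivity $\phi_{t+t'}(v)\le\phi_t(v)+\phi_{t'}(g_tv)$ plus flow-invariance of $\mathrm{Liou}_g$ gives existence of the limit and the $O(1)$ homotopy-perturbation argument gives invariance; the chain of estimates (crude length bound $\phi_t(v)\le\int_0^t|df(\dot\gamma_v(s))|\,ds$, Fubini and flow-invariance, fiberwise Cauchy--Schwarz, the averaging identity $\int_{S_pX}|df_p(v)|^2\,d\nu_p=\tfrac{V(S^{m-1})}{m}\,\mathrm{tr}_g(f^*h)_p$, and a final Cauchy--Schwarz over $(X,\mu_g)$) assembles to exactly the stated constant. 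The genuine difference from the paper's route is where the squaring happens: you bound the first power of $\phi_t$ by arc length and pay with two applications of Cauchy--Schwarz (over the sphere fiber and over the base), whereas the paper bounds $\phi_t^2$ by $t$ times the curve energy via Cauchy--Schwarz along each geodesic, which keeps everything quadratic, produces the sharper intermediate invariant $j_f$, and yields an equality characterization (geodesics mapping to length-minimizers) that your route does not directly expose. Your approach buys a direct proof of the classical statement without introducing $j_f$; the paper's buys a strictly stronger inequality from which the classical one follows by one more Cauchy--Schwarz. Either is acceptable; if you want the write-up to be self-contained you should still say a word about why the subadditive limit exists for the integrated quantity (Fekete's lemma applied to $t\mapsto\int\phi_t\,d\mathrm{Liou}_g$ suffices; Kingman's theorem is more than you need).
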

First of all, we introduce the variant of $i_f(g,h)$ and 
improve the above theorem. 
We put 
\begin{align*}
j_f(g,h):=\lim_{t\to \infty}\frac{1}{t^2}\int_{S_g(X)}\phi_t(v)^2d
{\rm Liou}_g(v).
\end{align*}

\begin{thm}
For a smooth map $f\colon X\to Y$, 
$j_f(g,h)$ is homotopy invariant. 
Moreover, for any $f$, we have 
\begin{align*}
\int_X \sigma_2(f)d\mu_g
\ge\frac{m}{V(S^{m-1})} j_f(g,h),
\end{align*}
where the equality holds iff 
the image of the geodesic in $X$ by $f$ 
minimizes the length in its homotopy class with the fixed endpoints. 
\label{thm croke fathi2}
\end{thm}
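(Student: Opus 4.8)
The plan is to establish both statements by a direct analysis of the function $\phi_t(v)$ and its asymptotics, following the spirit of Croke--Fathi but keeping track of the quadratic quantity $\phi_t(v)^2$ rather than just $\phi_t(v)$. First I would prove the homotopy invariance of $j_f(g,h)$. The key observation is that if $f_0$ and $f_1$ are homotopic, then for each geodesic segment $s\mapsto f_i(\gamma_v(s))$, $0\le s\le t$, the two curves $f_0\circ\gamma_v$ and $f_1\circ\gamma_v$ differ by paths whose lengths are bounded by a constant $D$ independent of $v$ and $t$ (the diameter-type bound coming from the homotopy $F$, since $X$ is compact). Hence $|\phi^{f_0}_t(v)-\phi^{f_1}_t(v)|\le 2D$, so dividing by $t^2$ and letting $t\to\infty$ kills the discrepancy: $j_{f_0}(g,h)=j_{f_1}(g,h)$. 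One should also note, as in \cite{CrokeFathi1990}, that the limit defining $j_f$ exists; this follows from a subadditivity/superadditivity argument applied to $t\mapsto\int_{S_g(X)}\phi_t(v)^2d{\rm Liou}_g(v)$ together with the invariance of the Liouville measure under the geodesic flow.

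For the energy inequality, the plan is to bound $\phi_t(v)$ from above by the actual length of the curve $s\mapsto f(\gamma_v(s))$, which is $\int_0^t |df_{\gamma_v(s)}(\gamma_v'(s))|\,ds$. Then I would apply the Cauchy--Schwarz inequality in the $s$-variable to get
\begin{align*}
\phi_t(v)^2\le t\int_0^t |df_{\gamma_v(s)}(\gamma_v'(s))|^2\,ds.
\end{align*}
Integrating over $S_g(X)$ against ${\rm Liou}_g$, dividing by $t^2$, and using the geodesic flow invariance of ${\rm Liou}_g$, the right-hand side becomes $\frac{1}{t}\int_0^t\big(\int_{S_g(X)}|df_p(v)|^2 d{\rm Liou}_g\big)ds=\int_{S_g(X)}|df_p(v)|^2 d{\rm Liou}_g$. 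The final step is the standard averaging identity: integrating $|df_p(v)|^2$ over the unit sphere in $T_pX$ gives $\frac{V(S^{m-1})}{m}\,{\rm tr}_g(f^*h)_p=\frac{V(S^{m-1})}{m}\sigma_2(f)(p)$, because for a quadratic form $q$ on $\R^m$, $\int_{S^{m-1}}q(v)\,dv=\frac{V(S^{m-1})}{m}{\rm tr}(q)$. Disintegrating ${\rm Liou}_g$ over $X$ then yields $j_f(g,h)\le\frac{V(S^{m-1})}{m}\int_X\sigma_2(f)\,d\mu_g$, which is the claimed bound.

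For the equality characterization, I would trace back through the two inequalities used. Equality in $\phi_t(v)\le\mathrm{length}(f\circ\gamma_v|_{[0,t]})$ forces the image $f(\gamma_v([0,t]))$ to already be length-minimizing in its homotopy class rel endpoints, for a.e.\ $v$ and all $t$; equality in Cauchy--Schwarz forces $|df_{\gamma_v(s)}(\gamma_v'(s))|$ to be constant in $s$, which is automatic once the curve is a minimizing geodesic parametrized appropriately. Conversely, if $f$ maps every geodesic to a curve minimizing length in its homotopy class, both steps are equalities and one recovers the identity. The main obstacle I expect is the careful justification of the measure-theoretic disintegration and of the exchange of limit and integral when passing from the $t$-averaged quantity to the pointwise energy density — in particular verifying that the superadditivity argument genuinely produces a limit (not merely a $\liminf$ or $\limsup$), and handling the a.e.\ subtleties in the equality case where $\phi_t$ need not be smooth in $v$. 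These are the places where one must invoke compactness of $X$, the ergodic-theoretic properties of the geodesic flow, and an approximation argument to make the ``image of the geodesic minimizes length'' condition precise.
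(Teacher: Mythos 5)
Your proposal is correct and follows essentially the same route as the paper: the averaging identity over the unit sphere in $T_pX$, the bound $\phi_t(v)\le L(f\circ\gamma_v|_{[0,t]})$ combined with Cauchy--Schwarz in $s$, invariance of the Liouville measure under the geodesic flow, and subadditivity of $t\mapsto\bigl(\int_{S_g(X)}\phi_t^2\,d{\rm Liou}_g\bigr)^{1/2}$. The one point to make explicit in the equality case is that Fekete's lemma gives $\lim_{t\to\infty}t^{-2}\int\phi_t^2=\inf_{t>0}t^{-2}\int\phi_t^2$, so equality in the limit forces equality of both intermediate inequalities at every finite $t$; this is precisely how the paper extracts the geometric characterization.
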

\begin{proof}
The proof is parallel to that of Theorem 
\ref{thm croke fathi}. 
The homotopy invariance of 
$j_f(g,h)$ is same as the case of $i_f(g,h)$. 
See the proof of \cite[Lemma 1.3]{CrokeFathi1990}. 

Next we show the inequality. 
Here we can see 
\begin{align*}
\int_X\sigma_2(f)d\mu_g
&=\frac{m}{V(S^{m-1})}\int_{S_g(X)} | df(v) |_h^2d{\rm Liou}_g(v).
\end{align*}
For $s\ge 0$, let $g_s\colon S_g(X)\to S_g(X)$ be the 
geodesic flow. 
Since $g_s$ preserves the Liouville measure, 
we can see 
\begin{align*}
\int_{S_g(X)}|df(v)|_h^2d{\rm Liou}_g(v)
&=\frac{1}{t}\int_{S_g(X)}\left(\int_0^t|df(g_sv)|_h^2
ds\right)d{\rm Liou}_g(v)\\
&=\frac{1}{t}\int_{S_g(X)}\mathcal{E}_2(f\circ\gamma_v|_{[0,t]})d{\rm Liou}_g(v),
\end{align*}
where $\mathcal{E}_2$ is the $2$-energy of the curves in 
$(Y,h)$. If $L(c)$ is the length of $c$, 
then we have 
\begin{align*}
\mathcal{E}_2(c)
=\int_a^b|c'(s)|_h^2ds
&\ge \frac{1}{b-a}\left(\int_a^b|c'(s)|_hds\right)^2\\
&= \frac{1}{b-a}\min_cL(c)^2,
\end{align*}
therefore 
\begin{align*}
\int_{S_g(X)}|df(v)|_h^2d{\rm Liou}_g(v)
&\ge\frac{1}{t^2}\int_{S_g(X)}\phi_t(v)^2d{\rm Liou}_g(v)
\end{align*}
for any $t>0$. 
Consequently, we have the second assertion by considering $t\to \infty$. 
Finally, we consider the condition when 
\begin{align*}
\int_{S_g(X)}|df(v)|_h^2d{\rm Liou}_g(v)
&= \lim_{t\to\infty}\frac{1}{t^2}\int_{S_g(X)}\phi_t(v)^2d{\rm Liou}_g(v)
\end{align*}
holds. 
To consider it, we show 
\begin{align}
\lim_{t\to\infty}\frac{1}{t^2}\int_{S_g(X)}\phi_t(v)^2d{\rm Liou}_g(v)
=\inf_{t>0}\frac{1}{t^2}\int_{S_g(X)}\phi_t(v)^2d{\rm Liou}_g(v).
\label{eq lim inf}
\end{align}
By \cite[Lemma 1.2]{CrokeFathi1990}, 
we have 
\begin{align*}
\phi_{t+t'}(v)\le \phi_{t'}(g_tv)+\phi_t(v)
\end{align*}
for any $t,t'\ge 0$. Then by combining the Cauchy-Schwarz 
inequality, we have 
\begin{align*}
\int_{S_g(X)}\phi_{t+t'}(v)^2d{\rm Liou}_g(v)
&\le
\int_{S_g(X)}\phi_{t'}(g_tv)^2d{\rm Liou}_g(v)\\
&\quad\quad
+2\sqrt{
\int_{S_g(X)}\phi_{t'}(g_tv)^2d{\rm Liou}_g(v)
\int_{S_g(X)}\phi_t(v)^2d{\rm Liou}_g(v)
}\\
&\quad\quad
+\int_{S_g(X)}\phi_t(v)^2d{\rm Liou}_g(v).
\end{align*}
Since the Liouville measure is invariant 
under the geodesic flow, 
we can see 
$\int_{S_g(X)}\phi_{t'}(g_tv)^2d{\rm Liou}_g(v)
=\int_{S_g(X)}\phi_{t'}(v)^2d{\rm Liou}_g(v)$, 
hence 
\begin{align*}
\int_{S_g(X)}\phi_{t+t'}(v)^2d{\rm Liou}_g(v)
&\le
\left( 
\sqrt{ \int_{S_g(X)}\phi_{t'}(v)^2d{\rm Liou}_g(v)}
+
\sqrt{ \int_{S_g(X)}\phi_t(v)^2d{\rm Liou}_g(v)}
\right)^2.
\end{align*}

If we put 
\begin{align*}
P_t:=\sqrt{ \int_{S_g(X)}\phi_t(v)^2d{\rm Liou}_g(v)},
\end{align*}
then we have $P_{t+t'}\le P_t+P_{t'}$, 
hence 
\begin{align*}
\inf_{t>0}\frac{P(t)}{t}
=\lim_{t\to\infty}\frac{P(t)}{t}.
\end{align*}
Thus we obtain \eqref{eq lim inf}. 

Now, suppose 
\begin{align*}
\int_X \sigma_2(f)d\mu_g
= \frac{m}{V(S^{m-1})} j_f(g,h).
\end{align*}
By the above argument, 
we can see that $f\circ \gamma_v|_{[0,t]}$ is 
geodesic for any $v\in S_g(X)$ and $t>0$, 
and $L(f\circ \gamma_v|_{[0,t]})$ 
gives the minimum of 
\begin{align*}
\left\{ L(c)|\, c\mbox{ is homotopic with the fixed endpoints to }f\circ \gamma_v|_{[0,t]}\right\}.
\end{align*}
\end{proof}

\begin{rem}
\normalfont
By the Cauchy-Schwarz inequality, we have 
\begin{align*}
j_f(g,h)\ge \frac{i_f(g,h)^2}{\mu_g(X)V(S^{m-1})},
\end{align*}
therefore, the inequality in Theorem \ref{thm croke fathi2} 
implies the inequality in 
Theorem \ref{thm croke fathi}. 
\end{rem}

Next we compute $j_f(g,h)$ in the case of flat tori, 
and compare with the lower bound obtained by Proposition \ref{prop lower torus}. 
Let $(\T^m,g)$ and $(\T^n,h)$ be as in 
Subsection \ref{subsec tori}, 
and take a coordinate $x$ on $\T^m$ 
and $y$ on $\T^n$ as in 
Subsection \ref{subsec tori}. 

\begin{prop}
Let $f\colon \T^m\to \T^n$ be a smooth map 
such that $f^*([dy^j])=\sum_iP_i^j[dx^i]$ 
for $P=(P_i^j)\in M_{m,n}(\Z)$. 
If we define $\Phi$ by 
\eqref{eq phi on torus} in Subsection \ref{subsec tori}, 
then we have 
\begin{align*}
j_f(g,h)=\frac{V(S^{m-1})}{m}\int_{\T^m}(1_{\T^m},f)^*\Phi.
\end{align*}
\end{prop}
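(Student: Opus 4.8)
The plan is to compute $j_f(g,h)$ directly from its definition and match it against the explicit value of $\int_{\T^m}(1_{\T^m},f)^*\Phi = \|P\|^2\,\vol_g(\T^m)$ that was already worked out in Subsection \ref{subsec tori}. The key simplification is that on a flat torus the geodesics $\gamma_v$ are straight lines, and the composition $f\circ\gamma_v$ has a well-understood homotopy class: since $f^*[dy^j]=\sum_i P_i^j[dx^i]$, the linear part of $f$ in homology is $P$, so the loop $s\mapsto f(\gamma_v(s))$ for $s\in[0,t]$ is homotopic with fixed endpoints to the straight segment in $\T^n$ whose ``winding vector'' is the displacement of the linearized map applied to $tv$. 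Concretely, for $v\in S_g(\T^m)$ the curve $f\circ\gamma_v|_{[0,t]}$ is homotopic rel endpoints to a geodesic in $(\T^n,h)$ of length asymptotically $t\,|{}^tP v|_h$ plus a term bounded uniformly in $v$ and $t$ (coming from the difference between $f$ and its affine approximation, which is periodic hence bounded). Therefore $\phi_t(v)/t \to |{}^tP v|_h$ as $t\to\infty$, uniformly in $v$.

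**Carrying it out.** First I would make precise the homotopy claim: lift $f$ to $\tilde f\colon\R^m\to\R^n$ with $\tilde f(x+e_i)=\tilde f(x)+\sum_j P_i^j e_j$, so $\tilde f(x) = {}^tP x + (\text{bounded periodic})$. Then $f\circ\gamma_v(s) = \tilde f(x_0+sv) \bmod \Z^n$, and the shortest curve in its rel-endpoint homotopy class in $\T^n$ is the geodesic realizing the vector $\tilde f(x_0+tv)-\tilde f(x_0)\in\R^n$, whose $h$-length is $|\tilde f(x_0+tv)-\tilde f(x_0)|_h = |t\,{}^tP v + O(1)|_h$. Squaring and dividing by $t^2$ gives $\phi_t(v)^2/t^2 \to |{}^tP v|_h^2$ pointwise, and the $O(1)$ error is uniform (periodicity of $\tilde f - {}^tP\,\cdot$), so dominated convergence applies. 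Hence
\begin{align*}
j_f(g,h)=\int_{S_g(\T^m)} |{}^tP v|_h^2\, d{\rm Liou}_g(v).
\end{align*}
Next I would evaluate this integral. Writing $|{}^tP v|_h^2 = {}^tv\,(P H\,{}^tP)\,v$ and integrating a quadratic form over the unit sphere bundle, the standard averaging identity $\int_{S_g(\T^m)} {}^tv\,Q\,v\,\,d{\rm Liou}_g = \frac{V(S^{m-1})}{m}\,{\rm tr}(G^{-1}Q)\,\vol_g(\T^m)$ (the factor $G^{-1}$ arising because the fiber sphere is the $g$-unit sphere, not the Euclidean one) yields
\begin{align*}
j_f(g,h)=\frac{V(S^{m-1})}{m}\,{\rm tr}(G^{-1}P H\,{}^tP)\,\vol_g(\T^m)
=\frac{V(S^{m-1})}{m}\,{\rm tr}({}^tP G^{-1} P H)\,\vol_g(\T^m).
\end{align*}
Comparing with the computation $\int_{\T^m}(1_{\T^m},f)^*\Phi = {\rm tr}({}^tPG^{-1}PH)\,\vol_g(\T^m)$ from Subsection \ref{subsec tori} gives exactly the claimed identity.

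**Main obstacle.** The routine parts are the two integral evaluations; the genuine point requiring care is the uniform asymptotics $\phi_t(v) = t\,|{}^tP v|_h + O(1)$ with the error independent of $v$. One must check both that $f\circ\gamma_v|_{[0,t]}$ is homotopic rel endpoints to the asserted straight geodesic segment in $\T^n$ (this is where $f^*$ on $H^1$ being the matrix $P$ enters — the homotopy class of a loop segment in the torus is detected by periods), and that replacing $f$ by its affine part changes the relevant length by a bounded amount. I would also want to confirm the constant in the sphere-bundle averaging formula against the normalization of ${\rm Liou}_g$ implicit in the identity $\int_X\sigma_2(f)\,d\mu_g = \frac{m}{V(S^{m-1})}\int_{S_g(X)}|df(v)|_h^2\,d{\rm Liou}_g(v)$ already used in the proof of Theorem \ref{thm croke fathi2}, so that the final constants match without ambiguity.
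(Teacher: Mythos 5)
Your proof is correct, but it takes a genuinely different route from the paper's. The paper exploits the machinery already set up in Section 6: it first uses the homotopy invariance of $j_f(g,h)$ (Theorem \ref{thm croke fathi2}) to replace $f$ by its linear representative, then observes that the linear map sends geodesics to length-minimizing curves in their rel-endpoint homotopy classes, so the \emph{equality case} of Theorem \ref{thm croke fathi2} applies and gives $j_f(g,h)=\frac{V(S^{m-1})}{m}\E_2(f_{\mathrm{lin}})$; the remaining work is the direct computation $\E_2(f_{\mathrm{lin}})=\|P\|^2\vol_g(\T^m)$, matched against the earlier evaluation of $\int_{\T^m}(1_{\T^m},f)^*\Phi$. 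You instead compute $j_f$ from scratch for an arbitrary representative: lifting $f$ to $\tilde f=\,{}^tP(\cdot)+(\text{periodic})$, identifying the rel-endpoint homotopy class of $f\circ\gamma_v|_{[0,t]}$ with the endpoint of its lift, and deriving the uniform asymptotics $\phi_t(v)=t\,|{}^tPv|_h+O(1)$ before integrating the quadratic form over the unit sphere bundle. Your route is longer but self-contained: it bypasses both the homotopy-invariance lemma and the equality characterization in Theorem \ref{thm croke fathi2}, and it yields strictly more information (the pointwise asymptotics of $\phi_t$ and, as a byproduct, a direct verification that $j_f$ depends only on $P$). The paper's route is shorter given what has already been proved, at the cost of relying on the equality case of Theorem \ref{thm croke fathi2}, whose proof in the paper is itself only sketched. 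Your concern about matching the normalization of ${\rm Liou}_g$ is well placed, and your averaging identity is consistent with the identity $\int_X\sigma_2(f)\,d\mu_g=\frac{m}{V(S^{m-1})}\int_{S_g(X)}|df(v)|_h^2\,d{\rm Liou}_g(v)$ used in the paper, so the constants do agree.
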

\begin{proof}
First of all, we can see that 
$f$ is homotopic to the map 
given by $x\mapsto Px$ for $x\in\T^m$, 
hence it suffices to show the equality 
by putting $f(x)={}^tPx$.  

Since the image of the geodesic by $f$ 
minimizes the length in its homotopy class with the fixed endpoints, then by Theorem 
\ref{thm croke fathi2}, 
we have 
$\E_2(f)=\frac{m}{V(S^{m-1})}j_f(g,h)$. 
we can compute $\E_2(f)$ directly as 
\begin{align*}
\E_2(f)=\int_{\T^m}|df|^2\vol_g
=\sum_{i,j,k,l}h_{ij}P_k^iP_l^jg^{kl}\vol_g(\T^m)
=\| P\|^2\vol_g(\T^m).
\end{align*}
Moreover, 
by the computation in Subsection \ref{subsec tori}, 
we have shown that 
\begin{align*}
\int_{\T^m}(1_{\T^m},f)^*\Phi=\| P\|^2\vol_g(\T^m).
\end{align*}
Therefore,  
\begin{align*}
\int_{\T^m}(1_{\T^m},f)^*\Phi
=\| P\|^2\vol_g(\T^m)
=\frac{m}{V(S^{m-1})}j_f(g,h).
\end{align*}
\end{proof}

\bibliographystyle{plain}

\begin{comment}
\begin{align*}
\end{align*}
\[ A=
\left (
\begin{array}{ccc}
 &  &  \\
 &  & \\
 &  & 
\end{array}
\right ).
\]

\begin{itemize}
\setlength{\parskip}{0cm}
\setlength{\itemsep}{0cm}
 \item[(1)] 
 \item[(2)] 
\end{itemize}

\begin{thm}
\end{thm}
\begin{prop}
\end{prop}
\begin{proof}
\end{proof}

\begin{definition}
\normalfont
\end{definition}
\begin{rem}
\normalfont
\end{rem}
\begin{lem}
\end{lem}

\end{comment}
\end{document}